\documentclass{article}

\usepackage[left=1.25in,top=1.25in,right=1.25in,bottom=1.25in,head=1.25in]{geometry}
\usepackage{amsfonts,amsmath,amssymb,amsthm}
\usepackage{verbatim,float,url}
\usepackage{graphicx,subfigure,psfrag}
\usepackage{dsfont,bm}
\usepackage{harvard}
\citationmode{abbr}

\newtheorem{algorithm}{Algorithm}

\newtheorem{lemma}{Lemma}

\theoremstyle{remark}

\theoremstyle{definition}

\newcommand{\argmin}{\mathop{\mathrm{argmin}}}
\newcommand{\argmax}{\mathop{\mathrm{argmax}}}

\def\P{\mathrm{P}}

\def\half{\frac{1}{2}}

\def\col{\mathrm{col}}
\def\row{\mathrm{row}}
\def\nul{\mathrm{null}}
\def\rank{\mathrm{rank}}

\def\sign{\mathrm{sign}}
\def\supp{\mathrm{supp}}
\def\diag{\mathrm{diag}}
\def\aff{\mathrm{aff}}

\def\hbeta{\hat{\beta}}
\def\tbeta{\tilde{\beta}}
\def\lone{1}
\def\ltwo{2}
\def\linf{\infty}

\def\T{^T}
\def\R{\mathds{R}}
\def\cA{\mathcal{A}}

\def\cE{\mathcal{E}}

\def\cN{\mathcal{N}}

\def\LARS{\mathrm{LARS}}

\title{The Lasso Problem and Uniqueness}
\author{Ryan J. Tibshirani}
\date{\it Carnegie Mellon University}

\begin{document}
\maketitle

\begin{abstract}
The lasso is a popular tool for sparse linear 
regression, especially for problems in which the number of
variables $p$ exceeds the number of observations $n$. But
when $p>n$, the lasso criterion is not strictly convex, and hence
it may not have a unique minimum. An important question is:
when is the lasso solution well-defined (unique)? We review results 
from the literature, which show that
if the predictor variables are drawn from a continuous probability
distribution, then there is a unique lasso solution with
probability one, regardless of the sizes of $n$ and $p$.
We also show that this result extends easily to $\ell_1$
penalized minimization problems over a wide range of loss
functions. 

A second important question is: how can we manage the case of 
non-uniqueness in lasso solutions? In light of the aforementioned result, 
this case really only arises when some of the predictor 
variables are discrete, or when some post-processing has been performed on 
continuous predictor measurements. Though we certainly cannot claim to provide 
a complete answer to such a broad question, we do present progress towards 
understanding some aspects of non-uniqueness. First, we extend the LARS algorithm 
for computing the lasso solution path to cover the non-unique case, so that this 
path algorithm works for any predictor matrix. Next, we derive a simple method for 
computing the component-wise uncertainty in lasso solutions of any given problem 
instance, based on linear programming. Finally, we review results from the 
literature on some of the unifying properties of lasso solutions, 
and also point out particular forms of solutions that have distinctive 
properties.
\end{abstract}

\section{Introduction}
\label{sec:intro}

We consider $\ell_1$ penalized linear regression, 
also known as the lasso problem \cite{lasso,bp}. Given a response vector
$y \in \R^n$, a matrix $X \in \R^{n\times p}$ of predictor variables, and a 
tuning parameter $\lambda \geq 0$, the lasso estimate can be defined as
\begin{equation}
\label{eq:lasso}
\hbeta \in \argmin_{\beta \in \R^p} \, 
\half \|y-X\beta\|_\ltwo^2 + \lambda \|\beta\|_\lone.
\end{equation}
The lasso solution is unique when $\rank(X)=p$, because the criterion is strictly
convex. This is not true when $\rank(X)<p$, and in this case, there can be 
multiple minimizers of the lasso criterion (emphasized by the element notation 
in \eqref{eq:lasso}). Note that when the number of variables
exceeds the number of observations, $p>n$, we must have $\rank(X)<p$.

The lasso is quite a popular tool for estimating the coefficients in
a linear model, especially in the high-dimensional setting, $p>n$. Depending
on the value of the tuning parameter $\lambda$, solutions of the lasso problem
will have many coefficients set exactly to zero, due to the 
nature of the $\ell_1$ penalty. We tend to think of the support set of a lasso 
solution $\hbeta$, written $\cA=\supp(\hbeta) \subseteq \{1,\ldots p\}$ and 
often referred to as the active set, as describing 
a particular subset of important variables for the linear model
of $y$ on $X$. Recently, there has been a lot of interesting work legitimizing this
claim by proving 
desirable properties of $\hbeta$ or its active set $\cA$, in terms of estimation
error or model recovery. Most of this work falls into the setting $p>n$. But 
such properties are not the focus of the 
current paper. Instead, our focus somewhat simpler, and at somewhat more of a
basic level: we investigate issues concerning the uniqueness or non-uniqueness 
of lasso solutions.

Let us first take a step back, and consider the usual linear 
regression estimate (given by $\lambda=0$ in \eqref{eq:lasso}), as
a motivating example.
Students of statistics are taught to distrust the coefficients given by 
linear regression when $p>n$. We may ask: why? Arguably, the main reason
is that the linear regression solution is not unique when $p>n$ (or
more precisely, when $\rank(X)<p$), and further, this non-uniqueness occurs
in such a way that we can always find a variable $i \in \{1,\ldots p\}$ whose 
coefficient is positive at one solution and negative at another. 
(Adding any element of the null space of $X$ to one least squares
solution produces another solution.) 
This makes it generally impossible to interpret the linear regression
estimate when $p>n$.

Meanwhile, the lasso estimate is also not unique when $p>n$ (or when 
$\rank(X)<p$), but it is commonly used in 
this case, and in practice little attention is paid to uniqueness. 
Upon reflection, this seems somewhat surprising, because non-uniqueness of 
solutions can cause major problems in terms of interpretation (as demonstrated 
by the linear regression case). Two basic questions are:
\begin{itemize}
\item Do lasso estimates suffer from the same sign inconsistencies as
do linear regression estimates? That is, for a fixed $\lambda$, 
can one lasso solution have a positive $i$th coefficient, 
and another have a negative $i$th coefficient?
\item Must any two lasso solutions, at the same value of $\lambda$, necessarily 
share the same support, and differ only in their estimates of the nonzero 
coefficient values? Or can different lasso solutions exhibit different active 
sets? 
\end{itemize}
Consider the following example, concerning the second question. Here we 
let $n=5$ and $p=10$. For a particular response $y \in \R^5$ and predictor
matrix $X \in \R^{5 \times 10}$, and $\lambda=1$, we found two solutions 
of the lasso problem \eqref{eq:lasso}, using two different algorithms. 
These are 
\begin{align*}
\hbeta^{(1)} &=  (-0.893, 0.620, 0.375, 0.497, \ldots, 0)\T
\;\,\text{and} \\
\hbeta^{(2)} &=  (-0.893, 0.869, 0.624, 0, \ldots, 0)\T,
\end{align*}
where we use ellipses to denote all zeros. In other words, the
first solution has support set $\{1,2,3,4\}$, and the second has
support set $\{1,2,3\}$. This is not at all ideal for the purposes 
of interpretation,
because depending on which algorithm we used to minimize the lasso
criterion, we may have considered the 4th variable to be important 
or not. Moreover, who knows which variables may have zero coefficients at
other solutions?

In Section \ref{sec:unique}, we show that if the entries of the predictor matrix 
$X$ are drawn from a continuous probability distribution, then we essentially never
have to worry about the latter problem---along with the problem of sign 
inconsistencies, and any other issues relating to non-uniqueness---because the 
lasso solution is unique with probability one. We emphasize that here uniqueness is 
ensured with probability one (over the distribution of $X$) regardless of the sizes 
of $n$ and $p$. This result has basically appeared in various forms in the 
literature, but is perhaps not as well-known as it should be. Section 
\ref{sec:unique} gives a detailed review of why this fact is true. 

Therefore, the two questions raised above only need to be addressed in the case
that $X$ contains discrete predictors, or contains some kind of post-processed 
versions of continuously drawn predictor measurements. To put it bluntly (and save
any dramatic tension), the 
answer to the first question is ``no''. In other words, no two lasso solutions 
can attach opposite signed coefficients to the same variable. We show
this using a very simple argument in Section \ref{sec:coefbounds}. As for the second
question, the example above already shows that the answer is unfortunately ``yes''.
However, the multiplicity of active sets can be dealt with in a principled manner,
as we argue in Section \ref{sec:coefbounds}. Here we show how to compute
lower and upper bounds on the coefficients of lasso solutions of any particular
problem instance---this reveals exactly which variables are assigned zero coefficients
at some lasso solutions, and which variables have nonzero coefficients at all lasso 
solutions.

Apart from addressing these two questions, we also attempt to better understand
the non-unique case through other means. In Section \ref{sec:lars}, we extend the
well-known LARS algorithm for computing the lasso solution path (over the 
tuning parameter $\lambda$) to cover the 
non-unique case. Therefore the (newly proposed) LARS algorithm can compute a lasso 
solution path for any predictor matrix $X$. (The existing LARS algorithm cannot,
because it assumes that for any $\lambda$ the active variables form 
a linearly independent set, which is not true in general.) The special lasso 
solution computed by the LARS algorithm, also called the LARS lasso solution, possesses
several interesting properties in the non-unique case. We explore these mainly in 
Section \ref{sec:lars}, and to a lesser extent in Section \ref{sec:relprops}.
Section \ref{sec:relprops} contains a few final miscellaneous 
properties relating to non-uniqueness, and the work of the previous three 
sections.

In this paper, we both review existing results from the literature,
and establish new ones, on the topic of uniqueness of lasso solutions.
We do our best to acknowledge existing works in the literature, with citations
either immediately preceeding or succeeding the statements of lemmas. 
The contents of this paper were already discussed above, but this was presented
out of order, and hence we give
a proper outline here. We begin in Section 
\ref{sec:unique} by examining the KKT optimality conditions for the lasso
problem, and we use these to derive sufficient conditions for the uniqueness
of the lasso solution. This culminates in a result that says that if the
entries of $X$ are continuously distributed, then the lasso solution is 
unique with probability one. We also show that this same result holds for
$\ell_1$ penalized minimization problems over a broad class of loss functions.
Essentially, the rest of the paper focuses on the case of a non-unique lasso
solution.
Section \ref{sec:lars} presents an extension of the LARS algorithm
for the lasso solution path that works for any predictor matrix $X$ 
(the original LARS algorithm really only applies to the case of a unique 
solution). We then discuss some special properties of the
LARS lasso solution. Section \ref{sec:coefbounds} develops a method for
computing component-wise lower and upper bounds on lasso coefficients
for any given problem instance. In Section \ref{sec:relprops}, we finish
with some related properties, concerning the different active sets of 
lasso solutions, and a necessary condition for uniqueness. Section 
\ref{sec:discuss} contains some discussion.

Finally, our notation in the paper is as follows. For a matrix $A$,
we write $\col(A)$, $\row(A)$, and $\nul(A)$ to denote its column space, 
row space, and null space, respectively. We use $\rank(A)$ for the rank
of $A$. We use $A^+$ to denote the Moore-Penrose pseudoinverse of $A$,
and when $A$ is rectangular, this means $A^+= (A\T A)^+ A\T$. For a
linear subspace $L$, we write $P_L$ for the projection map onto $L$.
Suppose that $A \in \R^{n\times p}$ has columns $A_1,\ldots A_p \in \R^n$, 
written $A=[A_1,\ldots A_p]$. Then for an index set $S = \{i_1,\ldots i_k\} 
\subseteq \{1,\ldots p\}$, we let $A_S = [A_{i_1},\ldots A_{i_k}]$;
in other words, $A_S$ extracts the columns of $A$ in $S$. Similarly,
for a vector $b \in \R^p$, we let $b_S = (b_{i_1},\ldots b_{i_r})\T$,
or in other words, $b_S$ extracts the components of $b$ in $S$. We
write $A_{-S}$ or $b_{-S}$ to extract the columns or components not 
in $S$.

\section{When is the lasso solution unique?}
\label{sec:unique}

In this section, we review the question: when is the lasso solution unique? 
In truth, we only give a partial answer, because we provide sufficient 
conditions for a unique minimizer of the lasso criterion. Later, in Section
\ref{sec:relprops}, we study the other direction (a necessary condition for 
uniqueness).

\subsection{Basic facts and the KKT conditions}
\label{sec:basic}

We begin by recalling a few basic facts about lasso solutions.

\begin{lemma}
\label{lem:basic}
For any $y,X$, and $\lambda \geq 0$, the lasso problem 
\eqref{eq:lasso} has the following properties:
\begin{itemize}
\item[(i)] There is either a unique lasso solution  
or an (uncountably) infinite number of solutions.
\item[(ii)] Every lasso solution $\hbeta$ gives the same 
fitted value $X\hbeta$. 
\item[(iii)] If $\lambda>0$, then every lasso solution $\hbeta$ 
has the same $\ell_1$ norm, $\|\hbeta\|_\lone$. 
\end{itemize}
\end{lemma}

\begin{proof}
(i) The lasso criterion is convex and has no directions
of recession (strictly speaking, when $\lambda=0$ the criterion
can have directions of recession, but these are directions in 
which the criterion is constant). Therefore it attains its 
minimum over $\R^p$ (see, for example, Theorem 27.1 of 
\citeasnoun{rockafellar}), that is, the lasso problem has at 
least one solution. Suppose now that there are two solutions
$\hbeta^{(1)}$ and $\hbeta^{(2)}$, $\hbeta^{(1)} \not=
\hbeta^{(2)}$. Because the solution set of a convex minimization
problem is convex, we know that 
$\alpha\hbeta^{(1)}+(1-\alpha)\hbeta^{(2)}$ is also a
solution for any $0 < \alpha < 1$, which gives 
uncountably many lasso solutions as $\alpha$ varies over 
$(0,1)$. 

\smallskip

(ii) Suppose that we have two solutions $\hbeta^{(1)}$ and
$\hbeta^{(2)}$ with $X\hbeta^{(1)} \not= X\hbeta^{(2)}$. Let
$c^*$ denote the minimum value of the lasso criterion obtained
by $\hbeta^{(1)},\hbeta^{(2)}$. For any $0 < \alpha < 1$, we 
have
\begin{equation*}
\half\|y-X(\alpha\hbeta^{(1)}+(1-\alpha)\hbeta^{(2)})\|_\ltwo^2
+ \lambda \|\alpha\hbeta^{(1)}+(1-\alpha)\hbeta^{(2)}\|_\lone
< \alpha c^* + (1-\alpha) c^* = c^*,
\end{equation*}
where the strict inequality is due to the strict convexity of
the function $f(x)=\|y-x\|_\ltwo^2$ along with the convexity of
$f(x)=\|x\|_\lone$. This means that 
$\alpha\hbeta^{(1)}+(1-\alpha)\hbeta^{(2)}$ attains a lower
criterion value than $c^*$, a contradiction.

\smallskip

(iii) By (ii), any two solutions  
must have the same fitted value, and hence
the same squared error loss. But the solutions also attain the 
same value of the lasso criterion, and if $\lambda>0$, then 
they must have the same $\ell_1$ norm.
\end{proof}

To go beyond the basics, we turn to the Karush-Kuhn-Tucker
(KKT) optimality conditions for the lasso problem \eqref{eq:lasso}. 
These conditions can be written as
\begin{gather}
\label{eq:lassokkt}
X\T (y-X\hbeta) = \lambda \gamma, \\
\label{eq:lassosg}
\gamma_i \in \begin{cases}
\{\sign(\hbeta_i)\} & \text{if} \;\; \hbeta_i \not= 0 \\
[-1,1] & \text{if} \;\; \hbeta_i = 0
\end{cases}, \;\;\;\text{for}\;\, i=1,\ldots p.
\end{gather}
Here $\gamma \in \R^p$ is called a subgradient of the function 
$f(x) = \|x\|_\lone$ evaluated at $x=\hbeta$. Therefore
$\hbeta$ is a solution in \eqref{eq:lasso} if and only if $\hbeta$ 
satisfies \eqref{eq:lassokkt} and \eqref{eq:lassosg} for some 
$\gamma$. 

We now use the KKT conditions to write the lasso fit and solutions in
a more explicit form. In what follows, we assume that $\lambda>0$ for the 
sake of simplicity (dealing with the case $\lambda=0$ is not difficult, 
but some of the definitions and statements need to be modified, 
avoided here in order to preserve readibility). 
First we define the 
equicorrelation set $\cE$ by
\begin{equation}
\label{eq:equiset}
\cE = \big\{i \in \{1,\ldots p\} : |X_i\T (y-X\hbeta)|=
\lambda\big\}.
\end{equation}
The equicorrelation set $\cE$ is named as such because when $y,X$ 
have been standardized, $\cE$ contains the variables 
that have equal (and maximal) absolute correlation with the 
residual. We define the equicorrelation signs $s$ by
\begin{equation}
\label{eq:equisigns}
s = \sign\big(X_\cE\T(y-X\hbeta)\big).
\end{equation}
Recalling \eqref{eq:lassokkt}, we note that the optimal subgradient
$\gamma$ is unique (by the uniqueness of the fit $X\hbeta$),
and we can equivalently define $\cE,s$ in terms of
$\gamma$, as in $\cE=\{i \in \{1,\ldots p\} : |\gamma_i|=1\}$ and 
$s=\gamma_\cE$. 
The uniqueness of $X\hbeta$ (or the uniqueness of $\gamma$) implies 
the uniqueness of $\cE,s$.

By definition of the subgradient $\gamma$ in \eqref{eq:lassosg}, we know 
that $\hbeta_{-\cE}=0$ for any lasso solution $\hbeta$.
Hence the $\cE$ block of \eqref{eq:lassokkt} can be 
written as
\begin{equation}
\label{eq:lassokkte}
X_\cE\T (y-X_\cE \hbeta_\cE) = \lambda s.
\end{equation}
This means that $\lambda s \in \row(X_\cE)$, so $\lambda s = 
X_\cE\T(X_\cE\T)^+ \lambda s$. Using this fact, and rearranging
\eqref{eq:lassokkte}, we get
\begin{equation*}
X_\cE\T X_\cE \hbeta_\cE = X_\cE\T\big(y - 
(X_\cE\T)^+ \lambda s\big).
\end{equation*}
Therefore the (unique) lasso fit 
$X\hbeta = X_\cE \hbeta_\cE$ is
\begin{equation}
\label{eq:lassofit}
X\hbeta = X_\cE (X_\cE)^+ \big(y -
(X_\cE\T)^+ \lambda s \big),
\end{equation}
and any lasso solution $\hbeta$ is of the form
\begin{equation}
\label{eq:lassosol}
\hbeta_{-\cE}=0 \;\;\;\text{and}\;\;\;
\hbeta_\cE = (X_\cE)^+ \big(y - (X_\cE\T)^+\lambda s \big) + b,
\end{equation}
where $b \in \nul(X_\cE)$. In particular,
any $b \in \nul(X_\cE)$ produces a lasso solution $\hbeta$ 
in \eqref{eq:lassosol} provided that $\hbeta$ has the correct signs 
over its nonzero coefficients, that is, 
$\sign(\hbeta_i)=s_i$ for all $\hbeta_i\not=0$. 
We can write these conditions together as
\begin{equation}
\label{eq:lassosign}
b \in \nul(X_\cE) \;\;\;\text{and}\;\;\;
s_i \cdot 
\Big(\big[(X_\cE)^+\big(y-(X_\cE\T)^+\lambda s\big)\big]_i + 
b_i\Big) \geq 0 \;\;\;\text{for}\;\, i \in \cE,
\end{equation}
and hence any $b$ satisfying \eqref{eq:lassosign} gives a lasso 
solution $\hbeta$ in \eqref{eq:lassosol}. In the next section, using a 
sequence of straightforward arguments, we prove that the lasso solution is
unique under somewhat general conditions.

\subsection{Sufficient conditions for uniqueness}
\label{sec:uniquesuff}

From our work in the previous section, we can see that if $\nul(X_\cE)=\{0\}$,
then the lasso solution is unique and is given by \eqref{eq:lassosol} with
$b=0$. (We note that $b=0$ necessarily satisfies the sign condition 
in \eqref{eq:lassosign}, because a lasso solution is guaranteed to exist by 
Lemma \ref{lem:basic}.) Then by rearranging \eqref{eq:lassosol}, done to 
emphasize the rank of $X_\cE$, we have the following result.

\begin{lemma}
\label{lem:unique}
For any $y,X$, and $\lambda>0$, if $\nul(X_\cE)=\{0\}$,
or equivalently if $\rank(X_\cE)=|\cE|$, then the lasso solution
is unique, and is given by
\begin{equation}
\label{eq:lassousol}
\hbeta_{-\cE}=0 \;\;\;\text{and}\;\;\;
\hbeta_\cE = (X_\cE\T X_\cE)^{-1} (X_\cE\T y - \lambda s),
\end{equation}
where $\cE$ and $s$ are the equicorrelation set and signs as defined in
\eqref{eq:equiset} and \eqref{eq:equisigns}. Note that this solution has
at most $\min\{n,p\}$ nonzero components. 
\end{lemma}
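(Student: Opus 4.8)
The plan is to build directly on the characterization of lasso solutions already obtained in \eqref{eq:lassosol}, which states that every solution has $\hbeta_{-\cE}=0$ and $\hbeta_\cE = (X_\cE)^+\big(y-(X_\cE\T)^+\lambda s\big) + b$ for some $b \in \nul(X_\cE)$, with the equicorrelation set $\cE$ and signs $s$ being fixed (unique). Under the hypothesis $\nul(X_\cE)=\{0\}$, the only admissible choice is $b=0$, and since a solution is known to exist by Lemma \ref{lem:basic} (so that the sign constraint \eqref{eq:lassosign} is met at $b=0$), this single vector is the unique lasso solution. The stated equivalence $\nul(X_\cE)=\{0\} \iff \rank(X_\cE)=|\cE|$ is just the standard fact that a matrix has trivial null space exactly when its columns are linearly independent, i.e.\ its rank equals its number of columns.

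Next I would convert \eqref{eq:lassosol} with $b=0$ into the claimed closed form by simplifying the pseudoinverses under full column rank. Because $X_\cE\T X_\cE$ is then invertible, we have the identities $(X_\cE)^+ = (X_\cE\T X_\cE)^{-1} X_\cE\T$ and $(X_\cE\T)^+ = X_\cE (X_\cE\T X_\cE)^{-1}$. Substituting both into $\hbeta_\cE$ and multiplying out, the cross term contains the factor $(X_\cE\T X_\cE)^{-1}(X_\cE\T X_\cE)$, which collapses to the identity, leaving $\hbeta_\cE = (X_\cE\T X_\cE)^{-1}(X_\cE\T y - \lambda s)$ exactly as stated. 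This is a short and routine computation.

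Finally, for the bound on the number of nonzero components, note that $\hbeta$ is supported on $\cE$, so it has at most $|\cE|$ nonzero entries and it suffices to bound $|\cE|$. Since $\cE \subseteq \{1,\ldots p\}$ we have $|\cE| \le p$, while $X_\cE \in \R^{n \times |\cE|}$ together with $\rank(X_\cE)=|\cE|$ forces $|\cE| = \rank(X_\cE) \le n$; hence $|\cE| \le \min\{n,p\}$. I do not expect a genuine obstacle here, as the substantive content lives in the prior derivation of \eqref{eq:lassosol}; the only points requiring care are the bookkeeping in the pseudoinverse simplification (using full column rank to collapse the cross term) and the already-noted fact that $b=0$ automatically satisfies the sign condition.
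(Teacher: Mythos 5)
Your proposal is correct and follows essentially the same route as the paper: the paper also deduces uniqueness from the characterization \eqref{eq:lassosol} by noting that $\nul(X_\cE)=\{0\}$ forces $b=0$, that $b=0$ automatically satisfies the sign condition \eqref{eq:lassosign} because a solution exists by Lemma \ref{lem:basic}, and then obtains \eqref{eq:lassousol} by rearranging via the full-column-rank pseudoinverse identities. Your explicit justification of the $\min\{n,p\}$ bound ($|\cE|=\rank(X_\cE)\leq n$ and $|\cE|\leq p$) is the intended reading of the paper's closing remark.
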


This sufficient condition for uniqueness has appeared many times in the literature. 
For example, see \citeasnoun{homotopy2}, \citeasnoun{fuchs}, \citeasnoun{sharp}, 
\citeasnoun{nearideal}. We will show later in Section \ref{sec:relprops} 
that the same condition is actually also necessary, for all almost every $y \in \R^n$.

Note that $\cE$ depends on the lasso solution at $y,X,\lambda$, 
and hence the condition $\nul(X_\cE)=\{0\}$ is somewhat circular. There
are more natural conditions, depending on $X$ alone, that imply $\nul(X_\cE)=\{0\}$. 
To see this, suppose that $\nul(X_\cE) \not= \{0\}$; then for some $i \in \cE$, we 
can write
\begin{equation*}
X_i = \sum_{j \in \cE\setminus\{i\}} c_j X_j,
\end{equation*}
where $c_j \in \R$, $j \in \cE\setminus\{i\}$. Hence, 
\begin{equation*}
s_i X_i = \sum_{j \in \cE\setminus\{i\}} (s_i s_j c_j) \cdot (s_j X_j).
\end{equation*}
By definition of the equicorrelation set, $X_j\T r = s_j\lambda$ for any
$j \in \cE$, where $r=y-X\hbeta$ is the lasso residual. Taking the inner 
product of both sides above with $r$, we get 
\begin{equation*}
\lambda = \sum_{j \in \cE\setminus\{i\}} (s_i s_j c_j) \lambda,
\end{equation*}
or
\begin{equation*}
\sum_{j \in \cE\setminus\{i\}} (s_i s_j c_j) = 1,
\end{equation*}
assuming that $\lambda>0$. Therefore, we have shown that
if $\nul(X_\cE)\not=\{0\}$, then for some $i\in\cE$,
\begin{equation*}
s_iX_i = \sum_{j \in \cE\setminus\{i\}} 
a_j \cdot s_jX_j,
\end{equation*}
with $\sum_{j \in \cE\setminus\{i\}} a_j = 1$, which means
that $s_iX_i$ lies in the affine span of $s_jX_j$, $j\in\cE\setminus\{i\}$.
Note that we can assume without a loss of generality that 
$\cE\setminus\{i\}$ has at most $n$ elements, since otherwise we can simply
repeat the above arguments replacing $\cE$ by any one of its subsets with
$n+1$ elements; hence the affine span of $s_jX_j$, $j\in\cE\setminus\{i\}$
is at most $n-1$ dimensional. 

We say that the matrix $X \in \R^{n\times p}$ has columns in {\it general position}
if any affine subspace $L \subseteq \R^n$ of dimension $k<n$ contains 
contains no more than $k+1$ elements of the set $\{\pm X_1,\ldots \pm X_p\}$, 
excluding antipodal pairs. Another way of saying this: the
affine span of any $k+1$ points $\sigma_1X_{i_1},\ldots \sigma_{k+1}X_{i_{k+1}}$, 
for arbitrary
signs $\sigma_1,\ldots \sigma_{k+1} \in \{-1,1\}$, does not contain any element
of $\{\pm X_i: i\not= i_1,\ldots i_{k+1}\}$. From what we have just shown,
the predictor matrix $X$ having columns in general position is enough to ensure
uniqueness.

\begin{lemma}
\label{lem:unique2}
If the columns of $X$ are in general position, then for any $y$ and
$\lambda>0$, the lasso solution is unique and is given by \eqref{eq:lassousol}.
\end{lemma}

This result has also essentially appeared in the literature, taking various forms
when stated for various related problems. For example, \citeasnoun{boostpath} give 
a similar result for general convex loss functions. 
\citeasnoun{dossal} gives a related result for the noiseless lasso problem (also
called basis pursuit). \citeasnoun{donmost} gives results tying togther the 
uniqueness (and equality) of solutions of the noiseless lasso problem and the 
corresponding $\ell_0$ minimization problem.

Although the definition of general position 
may seem somewhat technical, this condition is naturally satisfied when the entries 
of the predictor matrix $X$ are drawn from a continuous probability distribution. 
More precisely, if the entries of $X$ follow a joint distribution that 
is absolutely continuous with respect to Lebesgue measure on $\R^{np}$, then the 
columns of $X$ are in general position with probability one. To see this, first
consider the probability $\P(X_{k+2} \in \aff\{X_1,\ldots X_{k+1}\})$, where 
$\aff\{X_1,\ldots X_{k+1}\}$ denotes the affine span of $X_1,\ldots X_{k+1}$. 
Note that, by continuity, 
\begin{equation*}
\P(X_{k+2} \in \aff\{X_1,\ldots X_{k+1}\} \,|\, 
X_1,\ldots X_{k+1})=0,
\end{equation*}
because (for fixed $X_1,\ldots X_{k+1}$)
the set $\aff\{X_1,\ldots X_{k+1}\} \subseteq \R^n$ has Lebesgue
measure zero. Therefore, integrating over $X_1,\ldots X_{k+1}$, we get that
$\P(X_{k+2} \in \aff\{X_1,\ldots X_{k+1}\})=0$.
Taking a union over all subsets of $k+2$ columns, all 
combinations of $k+2$ signs, and all $k<n$, we conclude that with 
probability zero the columns are not in general position. This leads us
to our final sufficient condition for uniqueness of the lasso solution.

\begin{lemma}
\label{lem:unique3}
If the entries of $X \in \R^{n\times p}$
are drawn from a continuous probability distribution on $\R^{np}$, 
then for any $y$ and $\lambda>0$,
the lasso solution is unique and is given by \eqref{eq:lassousol} with 
probability one. 
\end{lemma}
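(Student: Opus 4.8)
The plan is to derive Lemma~\ref{lem:unique3} as an immediate corollary of Lemma~\ref{lem:unique2}, by showing that the hypothesis of Lemma~\ref{lem:unique2}---namely, that the columns of $X$ are in general position---holds with probability one when the entries of $X$ are drawn from a distribution absolutely continuous with respect to Lebesgue measure on $\R^{np}$. Since Lemma~\ref{lem:unique2} guarantees that general position implies a unique solution of the form \eqref{eq:lassousol} for every $y$ and every $\lambda>0$, it suffices to control the single event that $X$ fails to have columns in general position.

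First I would identify the ``bad'' event precisely. By the definition of general position, $X$ fails to be in general position exactly when there is some integer $k<n$, some choice of $k+2$ column indices, and some assignment of signs to those columns, such that one signed column lies in the affine span of the other $k+1$. The key observation is that this bad event is a \emph{finite union} of elementary events, because $p$ is finite: there are finitely many choices of $k$, finitely many subsets of columns of each size, and finitely many sign patterns ($2^{k+2}$ per subset). Hence, by countable (indeed finite) subadditivity of probability, it is enough to show that each elementary event has probability zero.

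Next I would bound a single elementary event, of the form $\P(\sigma_{k+2}X_{i_{k+2}} \in \aff\{\sigma_1 X_{i_1},\ldots,\sigma_{k+1}X_{i_{k+1}}\})$. The trick, already carried out in the excerpt for the representative case of unsigned columns $X_1,\ldots,X_{k+2}$, is to condition on the first $k+1$ columns and use continuity. For \emph{fixed} values of $X_{i_1},\ldots,X_{i_{k+1}}$, the affine span $\aff\{\sigma_1 X_{i_1},\ldots,\sigma_{k+1}X_{i_{k+1}}\}$ is an affine subspace of $\R^n$ of dimension at most $k<n$, and therefore has Lebesgue measure zero in $\R^n$. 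Because the conditional distribution of $X_{i_{k+2}}$ (given the other columns) is absolutely continuous, the conditional probability that $X_{i_{k+2}}$ lands in this measure-zero set is zero; multiplying by $\sigma_{k+2}\in\{-1,1\}$ does not change this. Integrating the conditional probability over the remaining columns shows the unconditional elementary event has probability zero, and the union bound then gives $\P(X \text{ not in general position})=0$. Applying Lemma~\ref{lem:unique2} on the complementary probability-one event completes the argument.

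The main obstacle---or at least the only point requiring care---is the reduction of the abstract general-position failure to a finite union of affine-membership events, together with the measure-theoretic justification that conditioning is valid. One must be slightly careful that the definition excludes antipodal pairs and that the conditioning argument applies uniformly across all sign choices; but since there are only finitely many sign patterns and finitely many index subsets, this is handled cleanly by the union bound. The underlying analytic fact---that a proper affine subspace has Lebesgue measure zero, so an absolutely continuous random vector avoids it almost surely---is the genuine content, and it is elementary.
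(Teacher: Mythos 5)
Your proposal is correct and follows essentially the same route as the paper: reduce to Lemma~\ref{lem:unique2}, then show general position fails only on a finite union of affine-membership events, each of probability zero by conditioning on the other columns and noting that an affine subspace of dimension $k<n$ has Lebesgue measure zero in $\R^n$. The paper's argument is exactly this conditioning-plus-union-bound computation, so there is nothing to add beyond your slightly more explicit bookkeeping of the sign patterns and index subsets.
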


According to this result, we essentially never have to worry about 
uniqueness when the predictor variables come from a continuous 
distribution, regardless of the sizes of $n$ and $p$. Actually,
there is nothing really special about $\ell_1$ penalized linear
regression in particular---we show next that the same uniqueness result holds 
for $\ell_1$ penalized minimization with any differentiable, strictly convex
loss function.

\subsection{General convex loss functions}

We consider the more general minimization problem
\begin{equation}
\label{eq:gen}
\hbeta \in \argmin_{\beta \in \R^p} \, f(X\beta) + \lambda \|\beta\|_\lone,
\end{equation}
where the loss function $f : \R^n \rightarrow \R$ is differentiable and strictly 
convex. To be clear, we mean that $f$ is strictly convex in its 
argument, so for example the function $f(u)=\|y-u\|_\ltwo^2$ is strictly convex, 
even though $f(X\beta)=\|y-X\beta\|_\ltwo^2$ may not be strictly 
convex in $\beta$. 

The main ideas from Section \ref{sec:basic} carry over to this more general
problem. The arguments given in the proof of Lemma \ref{lem:basic} can 
be applied (relying on the strict convexity of $f$) to show that the same set of 
basic results hold for problem \eqref{eq:gen}: (i) there is either a unique 
solution or uncountably many solutions;\footnote{To be precise, if $\lambda=0$
then problem \eqref{eq:gen} may not have a solution for an arbitrary differentiable,
strictly convex function $f$. This is because $f$ may have directions of recession 
that are not directions in which $f$ is constant, and hence it may not attain its 
minimal value. For example, the function $f(u)=e^{-u}$ is differentiable and
strictly convex on $\R$, but does not attain its minimum. Therefore, for $\lambda=0$,
the statements in this section should all be interpreted as conditional on the existence
of a solution in the first place. For $\lambda>0$, the $\ell_1$ penalty gets rid of 
this issue, as the criterion in \eqref{eq:gen} has no directions of recession, 
implying the existence of a solution.} (ii) every solution $\hbeta$ gives 
the same fit $X\hbeta$; (iii) if $\lambda>0$, then every solution $\hbeta$ has the 
same $\ell_1$ norm. The KKT conditions for \eqref{eq:gen} can be expressed as
\begin{gather}
\label{eq:genkkt}
X\T (-\nabla f)(X\hbeta) = \lambda \gamma, \\
\label{eq:gensg}
\gamma_i \in \begin{cases}
\{\sign(\hbeta_i)\} & \text{if} \;\; \hbeta_i \not= 0 \\
[-1,1] & \text{if} \;\; \hbeta_i = 0
\end{cases}, \;\;\;\text{for}\;\, i=1,\ldots p,
\end{gather}
where $\nabla f : \R^n \rightarrow \R^n$ is the gradient
of $f$, and we can define the equicorrelation set and signs in the
same way as before,
\begin{equation*}
\cE = \big\{i \in \{1,\ldots p\} : |X_i\T (-\nabla f)(X\hbeta)|=
\lambda\big\},
\end{equation*}
and 
\begin{equation*}
s = \sign\big(X_\cE\T (-\nabla f)(X\hbeta)\big).
\end{equation*}
The subgradient condition \eqref{eq:gensg} implies that $\hbeta_{-\cE}=0$
for any solution $\hbeta$ in \eqref{eq:gen}. For squared error loss, recall
that we then explicitly solved for $\hbeta_\cE$ as a function
of $\cE$ and $s$. This is not possible for a general loss function $f$; but
given $\cE$ and $s$, we can rewrite the minimization problem \eqref{eq:gen} 
over the coordinates in $\cE$ as
\begin{equation}
\label{eq:gene}
\hbeta_\cE \in \argmin_{\beta_\cE \in \R^{|\cE|}} \,
f(X_\cE\beta_\cE) + \lambda \|\beta_\cE\|_\lone.
\end{equation}
Now, if $\nul(X_\cE)=\{0\}$ (equivalently $\rank(X_\cE)=|\cE|$), then the 
criterion in \eqref{eq:gene} is strictly convex, as $f$ itself is strictly
convex. This implies that there is a unique solution $\hbeta_\cE$ in
\eqref{eq:gene}, and therefore a unique solution $\hbeta$ in \eqref{eq:gen}.
Hence, we arrive at the same conclusions as those made in Section 
\ref{sec:uniquesuff}, that there is a unique solution in \eqref{eq:gen} if the
columns of $X$ are in general position, and ultimately, the following result.

\begin{lemma}
\label{lem:unique4}
If $X \in \R^{n\times p}$ has entries drawn from a continuous 
probability distribution on $\R^{np}$, then
for any differentiable, strictly convex function $f$, and for any $\lambda>0$,
the minimization problem 
\eqref{eq:gen} has a unique solution with probability one.
This solution has at most $\min\{n,p\}$ nonzero components.
\end{lemma}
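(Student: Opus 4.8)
The plan is to mirror the argument already carried out for squared-error loss in Section~\ref{sec:uniquesuff}, replacing each step with its general-loss analogue. The key observation is that the entire chain of reasoning---from the KKT conditions, through the structural fact $\hbeta_{-\cE}=0$, to the general-position argument and finally the continuity/measure-zero computation---depends on squared-error loss in only one essential place: the step where we explicitly solved for $\hbeta_\cE$. So the strategy is to isolate that step and show it can be bypassed, while reusing everything else verbatim.

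First I would write down the KKT conditions \eqref{eq:genkkt}--\eqref{eq:gensg} and read off from the subgradient condition that $\hbeta_{-\cE}=0$ for any solution, exactly as before. Then, rather than inverting a Gram matrix, I would restrict the problem to the active coordinates, obtaining the reduced problem \eqref{eq:gene} in the variable $\beta_\cE \in \R^{|\cE|}$. The crucial algebraic point is this: if $\nul(X_\cE)=\{0\}$, then the map $\beta_\cE \mapsto X_\cE\beta_\cE$ is injective, so $f(X_\cE\beta_\cE)$ is strictly convex in $\beta_\cE$ (strict convexity of $f$ composed with an injective linear map remains strictly convex), and adding the convex penalty $\lambda\|\beta_\cE\|_\lone$ keeps the criterion strictly convex. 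A strictly convex criterion has at most one minimizer, so $\hbeta_\cE$, and hence $\hbeta$, is unique. This replaces Lemma~\ref{lem:unique} and is the only genuinely new ingredient.

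From here the argument is purely a matter of reuse. I would invoke the general-position computation exactly as written: the derivation that $\nul(X_\cE)\neq\{0\}$ forces some $s_iX_i$ into the affine span of the remaining $s_jX_j$ never used the specific form of the loss---it used only that $X_j\T r = s_j\lambda$ for $j\in\cE$, which is precisely the definition of the equicorrelation set (here with residual-type quantity $r = (-\nabla f)(X\hbeta)$). Therefore columns of $X$ in general position still imply $\nul(X_\cE)=\{0\}$, giving the general-loss analogue of Lemma~\ref{lem:unique2}. Finally the measure-zero argument---that a continuous distribution on $\R^{np}$ places zero mass on each event $\{X_{k+2}\in\aff\{X_1,\ldots,X_{k+1}\}\}$, and hence (by a union over finitely many subsets, sign patterns, and dimensions $k<n$) zero mass on the failure of general position---carries over unchanged, since it concerns only the distribution of $X$ and not the loss. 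This yields uniqueness with probability one. The bound of $\min\{n,p\}$ nonzero components follows because the active set $\cE$ satisfies $\rank(X_\cE)=|\cE|\le\rank(X)\le\min\{n,p\}$.

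I do not expect a serious obstacle here; the proof is essentially a verification that squared-error loss was used in only one replaceable step. The one point that warrants care is confirming that the general-position computation truly does not secretly depend on the loss: I would double-check that the identity $X_j\T r=s_j\lambda$ used there is simply the equicorrelation condition with $r$ reinterpreted as $(-\nabla f)(X\hbeta)$, and that the uniqueness of $\cE$ and $s$ (which followed from uniqueness of the fit $X\hbeta$ via the analogues of Lemma~\ref{lem:basic}) still holds---so that $\cE$ is well-defined independently of which solution we pick. Once that is confirmed, the rest is automatic.
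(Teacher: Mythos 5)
Your proposal is correct and follows essentially the same route as the paper: the paper likewise reduces to the subproblem \eqref{eq:gene}, observes that $\nul(X_\cE)=\{0\}$ makes the restricted criterion strictly convex (hence the solution unique), and then reuses the general-position and measure-zero arguments from Section \ref{sec:uniquesuff} unchanged, noting as you do that the equicorrelation identity $X_j\T(-\nabla f)(X\hbeta)=s_j\lambda$ and the uniqueness of $\cE,s$ (via uniqueness of the fit) are all that those arguments require.
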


This general result applies to any differentiable, strictly convex loss 
function $f$, which is quite a broad class. For example, it applies to 
logistic regression loss,
\begin{equation*}
f(u) = \sum_{i=1}^n \big[ -y_i u_i + \log\big(1+\exp(u_i)\big) \big],
\end{equation*}
where typically (but not necessarily) each $y_i \in \{0,1\}$, 
and Poisson regression loss,
\begin{equation*}
f(u) = \sum_{i=1}^n \big[ -y_i u_i + \exp(u_i)\big],
\end{equation*}
where typically (but again, not necessarily) each $y_i \in \mathds{N} 
= \{0,1,2,\ldots \}$. 

We shift our focus in the next section, and without assuming any conditions for 
uniqueness, we show how to compute a solution path for the lasso problem (over 
the regularization parameter $\lambda$).

\section{The LARS algorithm for the lasso path}
\label{sec:lars}

The LARS algorithm is a great tool for understanding the behavior of 
lasso solutions. (To be clear, here and throughout we use the term ``LARS algorithm'' 
to refer to the version of the algorithm that computes the lasso solution path, and 
not the version that performs a special kind of forward variable selection.)  
The algorithm begins at 
$\lambda=\infty$, where the lasso solution is trivially $0 \in \R^p$.
Then, as the parameter $\lambda$ decreases, it computes a solution path
$\hbeta^\LARS(\lambda)$ that is piecewise linear and 
continuous as a function of $\lambda$. Each knot in this path corresponds 
to an iteration of the algorithm, in which the path's linear trajectory is
altered in order to satisfy the KKT optimality conditions.
The LARS algorithm was proposed (and named) by \citeasnoun{lars}, though 
essentially the same idea appeared earlier in the works of 
\citeasnoun{homotopy1} and \citeasnoun{homotopy2}. 
It is worth noting that the LARS algorithm (as proposed in any of these works) 
assumes that $\rank(X_\cE)=|\cE|$ throughout the lasso path. 
This is not necessarily correct when $\rank(X)<p$, and can lead to errors in 
computing lasso solutions. (However, from what we showed in Section
\ref{sec:unique}, this ``naive'' assumption is indeed correct with probability 
one when the predictors are drawn from a continuous distribution, and this is 
likely the reason why such a small oversight has gone unnoticed since the time 
of the original publications.)

In this section, we extend the LARS algorithm to cover a 
generic predictor matrix $X$.\footnote{The description of 
this algorithm and its proof of correctness previously appeared in Appendix B 
of the author's doctoral dissertation \cite{ryanphd}.}
Though the lasso solution is not necessarily 
unique in this general case, and we may
have $\rank(X_\cE)<|\cE|$ at some points along path, we show that a piecewise 
linear and continuous path of solutions still exists, and computing this 
path requires only a simple modification to the previously proposed LARS
algorithm. We describe the algorithm and its steps in detail, but delay the
proof of its correctness until Appendix \ref{app:larsproof}. We also present
a few properties of this algorithm and the solutions along its path. 


\subsection{Description of the LARS algorithm}
\label{sec:larsalg}

We start with an overview of the LARS algorithm to compute the lasso path
(extended to cover an arbitrary predictor matrix $X$), and then we 
describe its steps in detail at a general iteration $k$. The algorithm 
presented here is of course very similar to the original LARS algorithm of 
\citeasnoun{lars}. The key difference is the following: if
$X_\cE\T X_\cE$ is singular, then the KKT conditions over 
the variables in $\cE$ no longer have a unique solution, and the current
algorithm uses the solution with the minimum $\ell_2$ norm, as in 
\eqref{eq:lassols} and \eqref{eq:lassols2}. This seemingly minor detail is 
the basis for the algorithm's correctness in the general $X$ case.

\begin{algorithm}[\textbf{The LARS algorithm for the lasso path}]
\label{alg:lars}
\hfill\par
\smallskip
\smallskip
Given $y$ and $X$.
\begin{itemize}
\item Start with the iteration counter $k=0$, 
regularization parameter $\lambda_0=\infty$, 
equicorrelation set $\cE=\emptyset$, and 
equicorrelation signs $s=\emptyset$.  
\item While $\lambda_k>0$:
\begin{enumerate}
\item Compute the LARS lasso solution at $\lambda_k$ by least squares,
as in \eqref{eq:lassols} and \eqref{eq:lassols2}. Continue in a
linear direction from the solution for $\lambda \leq \lambda_k$.
\item Compute the next joining time $\lambda^{\mathrm{join}}_{k+1}$,
when a variable outside the equicorrelation
set achieves the maximal absolute inner product with the 
residual, as in \eqref{eq:lassojoin} and \eqref{eq:lassonextj}.  
\item Compute the next crossing time $\lambda^{\mathrm{cross}}_{k+1}$, 
when the coefficient path of an 
equicorrelation variable crosses through zero, as in
\eqref{eq:lassocross} and \eqref{eq:lassonextc}. 
\item Set $\lambda_{k+1}=
\max\{\lambda^{\mathrm{join}}_{k+1},\lambda^{\mathrm{cross}}_{k+1}\}$. 
If $\lambda^{\mathrm{join}}_{k+1} > \lambda^{\mathrm{cross}}_{k+1}$, then 
add the joining variable to $\cE$ and its sign to $s$; otherwise, 
remove the crossing variable from $\cE$ and its sign from $s$. 
Update $k=k+1$.   
\end{enumerate}
\end{itemize}
\end{algorithm}

At the start of the $k$th iteration, the regularization parameter is 
$\lambda=\lambda_k$.
For the path's solution at $\lambda_k$, we set the 
non-equicorrelation coefficients equal to zero, $\hbeta^\LARS_{-\cE}(\lambda_k)=0$,
and we compute the equicorrelation coefficients as
\begin{equation}
\label{eq:lassols}
\hbeta^\LARS_\cE (\lambda_k)  = (X_\cE)^+
\big(y - (X_\cE\T)^+ \lambda_k s\big) = c - \lambda_k d,
\end{equation}
where $c=(X_\cE)^+y$ and $d=(X_\cE)^+(X_\cE\T)^+ s = (X_\cE\T X_\cE)^+ s$ 
are defined to help emphasize that this is a linear function of the regularization 
parameter. This estimate can be viewed as the minimum $\ell_2$ norm solution of 
a least squares problem on the variables in $\cE$ (in which we consider
$\cE,s$ as fixed):
\begin{equation}
\label{eq:lassols2}
\hbeta^\LARS_\cE(\lambda_k) = \argmin\bigg\{ \|\hbeta_\cE\|_\ltwo : \hbeta_\cE \in 
\argmin_{\beta_\cE \in \R^{|\cE|}} \, 
\|y-(X_\cE\T)^+ \lambda_k s - X_\cE \beta_\cE\|_\ltwo^2 \bigg\}.
\end{equation}
Now we decrease $\lambda$, keeping $\hbeta^\LARS_{-\cE}(\lambda)=0$, and letting 
\begin{equation*}
\hbeta^\LARS_\cE(\lambda)=c-\lambda d,
\end{equation*}
that is, moving in the linear direction suggested by \eqref{eq:lassols}. 
As $\lambda$ decreases, we make two important checks. First, we 
check when (that is, we compute the value of $\lambda$ at which) a variable 
outside the equicorrelation set 
$\cE$ should join $\cE$ because it attains the maximal absolute inner product with 
the residual---we call this the next joining time $\lambda^\mathrm{join}_{k+1}$.
Second, we check when a variable in $\cE$ will have a coefficient path crossing 
through zero---we call this the next crossing time $\lambda^\mathrm{cross}_{k+1}$.

For the first check, for each $i \notin \cE$, we solve the equation 
\begin{equation*}
X_i\T \big(y-X_\cE(c-\lambda d)\big) = \pm \lambda.
\end{equation*} 
A simple calculation shows that the solution is  
\begin{equation}
\label{eq:lassojoin}
t^\mathrm{join}_i = \frac{X_i\T(X_\cE c - y)}
{X_i\T X_\cE d \pm 1} = 
\frac{X_i\T \big(X_\cE(X_\cE)^+ - I\big)y}
{X_i\T (X_\cE\T)^+ s \pm 1},
\end{equation}
called the joining time of the $i$th variable.
(Although the notation is ambiguous, the quantity $t^\mathrm{join}_i$
is uniquely defined, as only one of $+1$ or $-1$ above will yield a value 
in $[0,\lambda_k]$). Hence the next joining time is
\begin{equation}
\label{eq:lassonextj}
\lambda^\mathrm{join}_{k+1} = \max_{i \notin \cE} \, 
t^\mathrm{join}_i,
\end{equation}
and the joining coordinate and its sign are
\begin{equation*}
i^\mathrm{join}_{k+1} = \argmax_i \, t^\mathrm{join}_i 
\;\;\; \text{and} \;\;\;
s^\mathrm{join}_{k+1} = \sign\Big(X_{i^\mathrm{join}_{k+1}}\T 
\big\{y-X\hbeta^\LARS(\lambda^\mathrm{join}_{k+1})\big\}\Big).
\end{equation*}

As for the second check, note that a variable $i \in \cE$ 
will have a zero coefficient when 
$\lambda=c_i/d_i=[(X_\cE)^+y]_i / [(X_\cE\T X_\cE)^+ s]_i$.
Because we are only considering $\lambda \leq \lambda_k$, we
define the crossing time of the $i$th variable as 
\begin{equation}
\label{eq:lassocross}
t^\mathrm{cross}_i = \begin{cases}
\frac{[(X_\cE)^+y]_i}{[(X_\cE\T X_\cE)^+ s]_i}
& \text{if} \;\;
\frac{[(X_\cE)^+y]_i}{[(X_\cE\T X_\cE)^+ s]_i}
\in [0,\lambda_k] \\
0 & \text{otherwise}.
\end{cases}
\end{equation}
The next crossing time is therefore
\begin{equation}\
\label{eq:lassonextc}
\lambda^\mathrm{cross}_{k+1} = \max_{i \in \cE} \, 
t^\mathrm{cross}_i,
\end{equation}
and the crossing coordinate and its sign are
\begin{equation*}
i^\mathrm{cross}_{k+1} = \argmax_i \, t^\mathrm{cross}_i
\;\;\; \text{and} \;\;\;
s^\mathrm{cross}_{k+1} = s_{i^\mathrm{cross}_{k+1}}.
\end{equation*}

Finally, we decrease $\lambda$ until the next joining time or 
crossing time---whichever happens first---by setting $\lambda_{k+1} =
\max\{\lambda^\mathrm{join}_{k+1},\lambda^\mathrm{cross}_{k+1}\}$. 
If $\lambda^{\mathrm{join}}_{k+1} > \lambda^{\mathrm{cross}}_{k+1}$, 
then we add the joining coordinate $i^\mathrm{join}_{k+1}$ to $\cE$ and its 
sign $s^\mathrm{join}_{k+1}$ to $s$. Otherwise, we delete the crossing
coordinate $i^\mathrm{cross}_{k+1}$ from $\cE$ and its sign 
$s^\mathrm{cross}_{k+1}$ from $s$. 

The proof of correctness for this algorithm shows that computed path 
$\hbeta^\LARS(\lambda)$ satisfies the KKT conditions \eqref{eq:lassokkt}
and \eqref{eq:lassosg} at each $\lambda$, and is hence indeed a 
lasso solution path. It also shows that the computed path is continuous at
each knot in the path $\lambda_k$, and hence is globally continuous 
in $\lambda$. The fact that $X_\cE\T X_\cE$ can be singular makes 
the proof somewhat complicated (at least more so than it is for the case
$\rank(X)=p$), and hence we delay its presentation until Appendix 
\ref{app:larsproof}. 

\subsection{Properties of the LARS algorithm and its solutions}
\label{sec:larsprops}

Two basic properties of the LARS lasso path, as mentioned
in the previous section, are piecewise linearity and continuity with respect 
to $\lambda$. The algorithm and the solutions along its computed path possess 
a few other nice properties, most of them discussed in this section, and some
others later in Section \ref{sec:relprops}. We begin with a property of the LARS 
algorithm itself. 

\begin{lemma}
\label{lem:larsbound}
For any $y,X$, the LARS algorithm for the lasso
path performs at most
\begin{equation*}
\sum_{k=0}^p {p \choose k} 2^k = 3^p
\end{equation*} 
iterations before termination. 
\end{lemma}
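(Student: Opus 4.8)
The plan is to bound the number of iterations by the number of distinct \emph{states} the algorithm can occupy, where a state is the pair $(\cE,s)$ consisting of the equicorrelation set $\cE \subseteq \{1,\ldots p\}$ and its sign vector $s \in \{-1,1\}^{|\cE|}$. First I would verify the stated count: an equicorrelation set of size $k$ can be chosen in $\binom{p}{k}$ ways, and for each such set there are $2^k$ sign vectors, so the total number of states is $\sum_{k=0}^p \binom{p}{k} 2^k$, which equals $(1+2)^p = 3^p$ by the binomial theorem. Since each iteration ends (step 4) by installing a new state obtained from the previous one by adjoining a joining variable or deleting a crossing variable, it then suffices to show that the algorithm never returns to a state it has already visited; the number of iterations is thereafter at most $3^p$.

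The heart of the argument is this no-repeat claim, which I would establish through the observation that a state $(\cE,s)$ rigidly determines the LARS trajectory. For fixed $(\cE,s)$ the coefficients $\hbeta_\cE(\lambda) = c - \lambda d$, with $c = (X_\cE)^+ y$ and $d = (X_\cE\T X_\cE)^+ s$, trace a single line in $\lambda$ that depends only on $(\cE,s)$ and $y$, not on the iteration at which the state arises. The conditions for this line to be a genuine lasso solution---namely $s_i(c_i - \lambda d_i) \geq 0$ for each $i \in \cE$, together with $|X_i\T(y - X_\cE(c - \lambda d))| \leq \lambda$ for each $i \notin \cE$---are all affine inequalities in the scalar $\lambda$. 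Hence the set $I_{(\cE,s)}$ of $\lambda$ for which $(\cE,s)$ yields a KKT-valid solution is an intersection of half-lines, i.e.\ a single interval. Moreover, by construction the algorithm follows this line while $\lambda$ stays in $I_{(\cE,s)}$ and transitions to a new state exactly at its lower endpoint, since $\lambda_{k+1}$ is taken to be the largest $\lambda < \lambda_k$ at which one of these affine constraints first becomes tight (a join or a cross).

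Combining these facts finishes the proof: the knots satisfy $\lambda_0 > \lambda_1 > \cdots$, so if a state $(\cE,s)$ were revisited at a later iteration, its $\lambda$ value would fall strictly below the lower endpoint of $I_{(\cE,s)}$, placing it outside $I_{(\cE,s)}$ and contradicting the validity of $(\cE,s)$ there; hence no state recurs. The main obstacle, as this makes clear, is the no-repeat claim, and within it the genuinely delicate ingredient is the strict monotonicity of the sequence $\lambda_k$: in degenerate situations (several variables joining or crossing at a common $\lambda$, or $\rank(X_\cE) < |\cE|$ along the path) one must rule out zero-length steps that could in principle let the algorithm cycle through states at a fixed $\lambda$ and return to an earlier one. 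I would handle this by appealing to the progress guarantees of the correctness proof in Appendix \ref{app:larsproof}, where continuity of the path and the explicit form of the joining and crossing times are used to show that consecutive knots are distinct.
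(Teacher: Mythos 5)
Your proposal is correct and is essentially the paper's own proof: both bound the number of iterations by the number of distinct pairs $(\cE,s)$, namely $\sum_{k=0}^p \binom{p}{k}2^k = 3^p$, and establish the no-repeat property from the fact that $\hbeta^\LARS_\cE(\lambda)=c-\lambda d$ is linear in $\lambda$, so the set of $\lambda$ at which a given $(\cE,s)$ satisfies the KKT constraints is convex (an interval). The paper phrases the no-repeat step slightly more directly---validity of $(\cE,s)$ at the two knots $\lambda_k$ and $\lambda_{k'}$ forces, by linearity, validity on all of $[\lambda_{k'},\lambda_k]$, contradicting that intermediate iterations changed the state---thereby avoiding explicit reliance on strict monotonicity of the knots; the degenerate zero-length-step issue you flag is genuine, but the paper's proof leaves it equally informal, so your treatment matches it in both substance and level of rigor.
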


\begin{proof}
The idea behind the proof is quite simple, and was first noticed by 
\citeasnoun{homotopy1} for their homotopy algorithm: any given pair
of equicorrelation set $\cE$ and sign vector $s$ that appear in one
iteration of the algorithm cannot be revisited in a future iteration, 
due to the linear nature of the solution path.
To elaborate, suppose that $\cE,s$ were the equicorrelation set and 
signs at iteration $k$ and also at iteration $k'$, with $k'>k$. Then this 
would imply that the constraints
\begin{gather}
\label{eq:cnstr1}
\big| X_i\T \big(y-X_\cE \hbeta^\LARS_\cE(\lambda)\big) \big| < \lambda
\;\;\;\text{for each}\;\, i \notin \cE, \\
\label{eq:cnstr2}
s_i \cdot \hbeta^\LARS_\cE(\lambda) > 0 
\;\;\;\text{for each}\;\, i \in \cE, 
\end{gather}
hold at both $\lambda=\lambda_k$ and $\lambda=\lambda_{k'}$. But 
$\hbeta_\cE^\LARS(\lambda)=c-\lambda d$ is a linear function of $\lambda$, 
and this implies that \eqref{eq:cnstr1} and \eqref{eq:cnstr2} also hold at 
every $\lambda \in [\lambda_k',\lambda_k]$, contradicting the fact that 
$k'$ and $k$ are distinct iterations. 
Therefore the total number of iterations performed by the LARS algorithm 
is bounded by the number of distinct pairs of subsets 
$\cE\subseteq \{1,\ldots p\}$ and sign vectors $s \in \{-1,1\}^{|\cE|}$.
\end{proof}

\noindent
{\it Remark.} \citeasnoun{larscomp} showed recently that the upper 
bound for the number of steps taken by the original LARS algorithm, which 
assumes that $\rank(X_\cE)=|\cE|$ throughout the path, can actually be
improved to $(3^p+1)/2$. Their proof is based on the following observation:
if $\cE,s$ are the equicorrelation set and signs at one iteration of 
the algorithm, then $\cE,-s$ cannot appear as the equicorrelation set
and signs in a future iteration. Indeed, this same observation is true for
the extended version of LARS presented here, by essentially the same 
arguments. Hence the upper bound in Lemma \ref{lem:larsbound} can also 
be improved to $(3^p+1)/2$. Interestingly, \citeasnoun{larscomp} further show 
that this upper bound is tight: they construct, for any $p$, a problem instance 
($y$ and $X$) for which the LARS algorithm takes exactly $(3^p+1)/2$ steps.

\smallskip
\smallskip
\smallskip
Next, we show that the end of the LARS lasso solution path 
($\lambda=0$) is itself an interesting least squares solution. 
\begin{lemma}
\label{lem:larslimit}
For any $y,X$, the LARS lasso solution converges
to a minimum $\ell_1$ norm least squares solution as
$\lambda \rightarrow 0^+$, that is,
\begin{equation*}
\lim_{\lambda \rightarrow 0^+} 
\hbeta^{\mathrm{LARS}}(\lambda) = 
\hbeta^{\mathrm{LS},\ell_1},
\end{equation*}
where $\hbeta^{\mathrm{LS},\ell_1} \in 
\argmin_{\beta \in \R^p} \|y-X\beta\|_\ltwo^2$ and 
achieves the minimum $\ell_1$ norm over all such 
solutions. 
\end{lemma}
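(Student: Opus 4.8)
The plan is to use the piecewise-linear, finitely-terminating structure of the LARS path to pass to the limit explicitly, and then to produce a dual certificate showing that the limiting vector solves the minimum $\ell_1$ norm least squares problem.

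First, by Lemma~\ref{lem:larsbound} the algorithm performs finitely many iterations, so there is a final knot $\lambda_K>0$ below which the equicorrelation set $\cE$ and signs $s$ are constant and the path is linear: for $\lambda\in(0,\lambda_K]$ we have $\hbeta^\LARS_{-\cE}(\lambda)=0$ and $\hbeta^\LARS_\cE(\lambda)=c-\lambda d$, with $c=(X_\cE)^+y$ and $d=(X_\cE\T X_\cE)^+s$ as in \eqref{eq:lassols}. Hence the limit exists and equals
\[
\hbeta^\star := \lim_{\lambda\to 0^+}\hbeta^\LARS(\lambda), \qquad \hbeta^\star_{-\cE}=0,\quad \hbeta^\star_\cE=c=(X_\cE)^+y.
\]
I would record two facts for later: $X\hbeta^\star=X_\cE(X_\cE)^+y=P_{\col(X_\cE)}y$, and, since $s_i(c_i-\lambda d_i)>0$ on the open segment by \eqref{eq:cnstr2}, $s_i=\sign(\hbeta^\star_i)$ for every $i\in\cE$ with $\hbeta^\star_i\neq 0$ (coordinates of $c$ that happen to vanish in the limit cause no trouble below).

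Second, I would show $\hbeta^\star$ is a least squares solution. Writing the residual on the final segment as $r(\lambda)=y-X_\cE(c-\lambda d)=(I-P_{\col(X_\cE)})y+\lambda v$ with $v=(X_\cE\T)^+s$, the optimal subgradient is $\gamma(\lambda)=\tfrac{1}{\lambda}X\T r(\lambda)=\tfrac{1}{\lambda}X\T(I-P_{\col(X_\cE)})y+X\T v$. For each $i\notin\cE$ the KKT conditions (satisfied by the path throughout, per Appendix~\ref{app:larsproof}) give $|\gamma_i(\lambda)|<1$ for all $\lambda\in(0,\lambda_K)$; since the first term would blow up as $\lambda\to 0^+$, this is possible only if $[X\T(I-P_{\col(X_\cE)})y]_i=0$. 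As the same inner product vanishes trivially for $i\in\cE$, we get $X\T(y-X\hbeta^\star)=X\T(I-P_{\col(X_\cE)})y=0$, i.e. $\hbeta^\star$ satisfies the normal equations, so it is a least squares solution and $P_{\col(X_\cE)}y=P_{\col(X)}y=:\hat y$.

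Finally --- and this is the crux --- I would certify minimality of $\|\hbeta^\star\|_\lone$ over the affine set $\{\beta:X\beta=\hat y\}$. The vanishing just derived means the limiting subgradient is simply $\gamma^\star:=\lim_{\lambda\to 0^+}\gamma(\lambda)=X\T v$, with $\|\gamma^\star\|_\linf\le 1$ and $\gamma^\star_\cE=s$. Thus $\gamma^\star$ lies in $\row(X)$ and, by the sign fact above together with $|s_i|=1$ and $|\gamma^\star_i|\le 1$ on the complementary coordinates, $\gamma^\star$ is a subgradient of $\|\cdot\|_\lone$ at $\hbeta^\star$. But $X\T v\in\partial\|\hbeta^\star\|_\lone$ is exactly the stationarity condition for the convex program $\min_\beta\|\beta\|_\lone$ subject to $X\beta=\hat y$, with $v$ as the dual variable; feasibility was shown in the previous step, so $\hbeta^\star$ is a global minimizer, i.e. $\hbeta^\star=\hbeta^{\mathrm{LS},\ell_1}$. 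The main obstacle is precisely this last step: the argument only works because the limiting residual direction $v$ produces, through $X\T v$, a vector that is simultaneously in $\row(X)$ and a valid $\ell_1$ subgradient at $\hbeta^\star$. The delicate point is that some equicorrelation coordinates may drift to zero as $\lambda\to 0^+$, so there $s_i$ is no longer the sign of a nonzero coefficient; but since $|s_i|=1\in[-1,1]$ the subgradient inclusion still holds, keeping the certificate valid. I would also note that one must justify that the path genuinely reaches $\lambda=0$ (rather than stalling at a positive knot), which follows from the correctness of Algorithm~\ref{alg:lars} proved in Appendix~\ref{app:larsproof}.
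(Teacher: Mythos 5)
Your proposal is correct, but it takes a genuinely different route from the paper at the decisive step. For the least squares property, the two arguments are near-identical in substance (the paper simply plugs $\lambda=0$ into the bound $|X_i\T(y-X\hbeta^\LARS(\lambda))|\leq\lambda$, which is slightly cleaner than your $1/\lambda$ blow-up argument, though note the non-strict inequality already suffices for your version). The real divergence is in certifying minimal $\ell_1$ norm: the paper uses a soft continuity-plus-contradiction argument---if some least squares solution $\hbeta^{\mathrm{LS}}$ had strictly smaller $\ell_1$ norm than $\hbeta^\LARS(0)$, then by continuity of the path $\|\hbeta^{\mathrm{LS}}\|_\lone < \|\hbeta^\LARS(\lambda)\|_\lone$ would persist at some $\lambda>0$, where $\hbeta^{\mathrm{LS}}$ (having minimal squared loss) would then beat $\hbeta^\LARS(\lambda)$ on the lasso criterion, contradicting optimality. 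You instead build an explicit dual certificate: $\gamma^\star = X\T v$ with $v=(X_\cE\T)^+s$, checking $\gamma^\star_\cE = s$ (valid since $s\in\row(X_\cE)$, as the paper notes via \eqref{eq:lassokkte}), $\|\gamma^\star\|_\linf\leq 1$, and the subgradient inclusion at $\hbeta^\star$ even where equicorrelation coefficients vanish in the limit---a point you rightly flag and handle. The paper's argument is shorter and needs nothing beyond continuity and pointwise optimality of the path, so it is more robust to the details of the final segment; your argument demands more bookkeeping but buys more: it identifies the limit in closed form ($\hbeta^\star_\cE=(X_\cE)^+y$, so the limit is the minimum $\ell_2$ norm point among minimum $\ell_1$ norm least squares solutions, consistent with Lemma \ref{lem:larsminl2}) and exhibits an explicit dual optimal variable for the basis pursuit problem $\min\{\|\beta\|_\lone : X\beta=\hat{y}\}$, which the paper's proof never constructs. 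Both proofs legitimately rely on Lemma \ref{lem:larsbound} and the correctness results of Appendix \ref{app:larsproof}.
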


\begin{proof}
First note that by Lemma \ref{lem:larsbound},
the algorithm always takes a finite number of iterations before
terminating, so the limit here is always attained by the algorithm
(at its last iteration).
Therefore we can write $\hbeta^\LARS(0)=
\lim_{\lambda \rightarrow 0^+} \hbeta^\LARS(\lambda)$.
Now, by construction, the LARS lasso solution satisfies
\begin{equation*}
\big|X_i\T \big(y-X\hbeta^\LARS(\lambda)\big)\big| \leq \lambda 
\;\;\;\text{for each}\;\, i=1,\ldots p,
\end{equation*}
at each $\lambda \in [0,\infty]$. Hence at $\lambda=0$ we have
\begin{equation*}
X_i\T \big(y-X\hbeta^\LARS(0)\big) = 0
\;\;\;\text{for each}\;\, i=1,\ldots p,
\end{equation*}
implying that $\hbeta^\LARS(0)$ is a least squares solution.
Suppose that there exists another least squares solution
$\hbeta^\mathrm{LS}$ with $\|\hbeta^\mathrm{LS}\|_\lone < 
\|\hbeta^\LARS(0)\|_\lone$. Then by continuity of the LARS lasso solution 
path, there exists some $\lambda>0$ such that still $\|\hbeta^\mathrm{LS}\|_\lone 
< \|\hbeta^\LARS(\lambda)\|_\lone$, so that
\begin{equation*}
\half\|y-X\hbeta^\mathrm{LS}\|_\ltwo^2 + 
\lambda\|\hbeta^\mathrm{LS}\|_\lone <
\half\|y-X\hbeta^\LARS(\lambda)\|_\ltwo^2 + 
\lambda\|\hbeta^\LARS(\lambda)\|_\lone.
\end{equation*}
This contradicts the fact that $\hbeta^\LARS(\lambda)$ is a lasso 
solution at $\lambda$, and therefore $\hbeta^\LARS(0)$ achieves the minimum 
$\ell_1$ norm over all least squares solutions.
\end{proof}

We showed in Section \ref{sec:larsalg} that the LARS algorithm constructs 
the lasso solution 
\begin{equation*}
\hbeta^\LARS_{-\cE}(\lambda)=0 \;\;\;\text{and}\;\;\;
\hbeta^\LARS_\cE(\lambda) = (X_\cE)^+\big(y-(X_\cE\T)^+\lambda s\big),
\end{equation*}
by decreasing $\lambda$ from $\infty$, and continually checking whether 
it needs to include or exclude variables from the equicorrelation set $\cE$.
Recall our previous description \eqref{eq:lassosol} of the set of 
lasso solutions at any given $\lambda$. In \eqref{eq:lassosol}, different lasso 
solutions are formed by choosing different vectors $b$ that satisfy the two 
conditions given in \eqref{eq:lassosign}: a null space condition, 
$b \in \nul(X_\cE)$, and a sign condition,
\begin{equation*}
s_i \cdot 
\Big(\big[(X_\cE)^+\big(y-(X_\cE\T)^+\lambda s\big)\big]_i + 
b_i\Big) \geq 0 \;\;\;\text{for}\;\, i \in \cE.
\end{equation*}
We see that the LARS lasso solution corresponds to the choice $b=0$.
When $\rank(X)=|\cE|$, $b=0$ is the only vector in $\nul(X_\cE)$, so it satisfies
the above sign condition by necessity (as we know that a lasso solution must exist 
Lemma \ref{lem:basic}). On the other hand, when $\rank(X)<|\cE|$, it 
is certainly true that $0 \in \nul(X_\cE)$, but it is not at all obvious that 
the sign condition is satisfied by $b=0$. The LARS algorithm establishes this fact by 
constructing an entire lasso solution path with exactly this property ($b=0$) over 
$\lambda \in [0,\infty]$. At the risk of sounding repetitious, we state this result
next in the form of a lemma. 

\begin{lemma}
\label{lem:larssol}
For any $y,X$, and $\lambda > 0$, a lasso solution is given by
\begin{equation}
\label{eq:larssol}
\hbeta^\LARS_{-\cE}=0 \;\;\;\text{and}\;\;\;
\hbeta^\LARS_\cE = (X_\cE)^+\big(y-(X_\cE\T)^+\lambda s\big),
\end{equation}
and this is the solution computed by the
LARS lasso path algorithm. 
\end{lemma}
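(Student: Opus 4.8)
The plan is to obtain the result as a corollary of the correctness of the LARS path algorithm (to be established in Appendix \ref{app:larsproof}), after first pinning down that the closed form \eqref{eq:larssol} is nothing but the $b=0$ instance of the general solution description \eqref{eq:lassosol}. First I would observe that $(X_\cE)^+\big(y-(X_\cE\T)^+\lambda s\big)$ lies in $\row(X_\cE)=\nul(X_\cE)^\perp$, so it carries no component in $\nul(X_\cE)$; in the notation of \eqref{eq:lassosol} this is precisely the choice $b=0$, and it coincides term-for-term with the path value $\hbeta^\LARS_\cE(\lambda)$ from \eqref{eq:lassols}. Consequently, proving the lemma reduces to showing that $b=0$ satisfies the sign condition in \eqref{eq:lassosign}, namely $s_i\cdot\big[(X_\cE)^+(y-(X_\cE\T)^+\lambda s)\big]_i \ge 0$ for every $i \in \cE$. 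The stationarity block \eqref{eq:lassokkte} and the strict bound $|\gamma_i|<1$ for $i\notin\cE$ come for free from the KKT derivation in Section \ref{sec:basic}, so the sign condition is the only open point.

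Next I would invoke the LARS construction. The algorithm of Section \ref{sec:larsalg} produces, as $\lambda$ ranges over $[0,\infty]$, a path whose value at each $\lambda$ is exactly of the form \eqref{eq:larssol} (the minimum $\ell_2$ norm least squares solution \eqref{eq:lassols2}), and the correctness proof in Appendix \ref{app:larsproof} shows that this path satisfies the KKT conditions \eqref{eq:lassokkt} and \eqref{eq:lassosg} at every $\lambda$. In particular it verifies the sign constraint \eqref{eq:cnstr2} all along the path, which is precisely the inequality left open above. Hence, for any fixed $\lambda>0$, the LARS path value is a genuine lasso solution of the form \eqref{eq:larssol}.

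The remaining bookkeeping point is to confirm that the running equicorrelation set and signs maintained by the algorithm agree with the set $\cE$ and signs $s$ attached to the lasso problem at $\lambda$ via \eqref{eq:equiset} and \eqref{eq:equisigns}. This is a consequence of uniqueness: since the LARS path value is a lasso solution, it attains the unique fit $X\hbeta$, hence the unique subgradient $\gamma$ of \eqref{eq:lassokkt}, and both $\cE=\{i:|\gamma_i|=1\}$ and $s=\gamma_\cE$ are then the same whether read off from the algorithm's state or from the problem itself. With $\cE,s$ so identified, the expression \eqref{eq:larssol} is unambiguous and equals the LARS lasso solution, which establishes both assertions of the lemma.

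I expect the main obstacle to be exactly the sign condition, which is invisible from the closed form \eqref{eq:larssol} on its own: when $\rank(X_\cE)<|\cE|$ there is no elementary reason for the minimum $\ell_2$ norm coefficients to inherit the signs $s$. It is precisely the global, inductive path-tracking of Appendix \ref{app:larsproof}, which maintains the strict constraints \eqref{eq:cnstr1} and \eqref{eq:cnstr2} between knots and across the joining and crossing events, that supplies this fact. The proof here is therefore essentially a one-line deduction from that appendix, combined with the $b=0$ identification described above.
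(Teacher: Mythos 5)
Your proposal is correct and follows essentially the same route as the paper: the text preceding the lemma makes exactly your $b=0$ identification in \eqref{eq:lassosol}, isolates the sign condition in \eqref{eq:lassosign} as the only nontrivial point, and discharges it by appeal to the inductive correctness proof of the LARS path in Appendix \ref{app:larsproof}. Your added bookkeeping step---that the algorithm's maintained $\cE,s$ coincide with the definitions \eqref{eq:equiset}--\eqref{eq:equisigns} via uniqueness of the fit and subgradient---is implicit in the paper's argument and is a sound way to make it explicit.
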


For one, this lemma is perhaps interesting from a computational point of view:
it says that for any $y,X$, and $\lambda>0$, a lasso solution (indeed, the LARS lasso 
solution) can be computed directly from $\cE$ and $s$, which themselves can be computed
from the unique lasso fit. Further, for any $y,X$, we can start with a lasso solution
at $\lambda>0$ and compute a local solution path using the same LARS steps; see Appendix
\ref{app:loclars} for more details. Aside from computational interests,
the explicit form of a lasso solution given by Lemma \ref{lem:larssol}
may be helpful for the purposes of mathematical analysis; 
for example, this form is used by \citeasnoun{lassodf2} to give a simpler 
proof of the degrees of freedom of the lasso fit, for a general $X$, in terms of the 
equicorrelation set. As another example, it is also used in Section \ref{sec:relprops} 
to prove a necessary condition for the uniqueness of the lasso solution (holding almost 
everywhere in $y$). 

We show in Section \ref{sec:relprops} that, for almost every $y\in\R^n$, 
the LARS lasso solution is supported on all of $\cE$ and hence has the largest support
of any lasso solution (at the same $y,X,\lambda$). 
As lasso solutions all have the same $\ell_1$ norm, by Lemma \ref{lem:basic}, this means 
that the LARS lasso solution spreads out the common $\ell_1$ norm over the largest number 
of coefficients. It may not be surprising, then, that the LARS lasso solution has the 
smallest $\ell_2$ norm among lasso solutions, shown next.

\begin{lemma}
\label{lem:larsminl2}
For any $y,X$, and $\lambda > 0$, the LARS lasso solution $\hbeta^\LARS$ 
has the minimum $\ell_2$ norm over all lasso solutions.
\end{lemma}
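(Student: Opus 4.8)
The plan is to leverage the explicit characterizations already established in the excerpt. Lemma \ref{lem:larssol} gives the LARS lasso solution as $\hbeta^\LARS_{-\cE}=0$ and $\hbeta^\LARS_\cE = (X_\cE)^+\big(y-(X_\cE\T)^+\lambda s\big)$, while \eqref{eq:lassosol} says that \emph{every} lasso solution $\hbeta$ has $\hbeta_{-\cE}=0$ and $\hbeta_\cE = \hbeta^\LARS_\cE + b$ for some $b \in \nul(X_\cE)$. Thus each solution differs from the LARS solution only by a vector $b$ living in the null space of $X_\cE$, and since all solutions agree (at zero) off of $\cE$, it suffices to compare $\ell_2$ norms on the block indexed by $\cE$.

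The key step is to observe that $\hbeta^\LARS_\cE$ lies in $\row(X_\cE)$. This follows from the standard fact that the range of the Moore-Penrose pseudoinverse is the row space, $\col\big((X_\cE)^+\big)=\row(X_\cE)$, so that $(X_\cE)^+\big(y-(X_\cE\T)^+\lambda s\big) \in \row(X_\cE)$. Because $\row(X_\cE)$ and $\nul(X_\cE)$ are orthogonal complements, the vectors $\hbeta^\LARS_\cE$ and $b$ are orthogonal, and the Pythagorean theorem gives
\begin{equation*}
\|\hbeta_\cE\|_\ltwo^2 = \|\hbeta^\LARS_\cE + b\|_\ltwo^2 = \|\hbeta^\LARS_\cE\|_\ltwo^2 + \|b\|_\ltwo^2 \geq \|\hbeta^\LARS_\cE\|_\ltwo^2,
\end{equation*}
with equality precisely when $b=0$.

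Finally, since $\hbeta_{-\cE}=\hbeta^\LARS_{-\cE}=0$, the full norms reduce to the block norms, $\|\hbeta\|_\ltwo=\|\hbeta_\cE\|_\ltwo$ and $\|\hbeta^\LARS\|_\ltwo=\|\hbeta^\LARS_\cE\|_\ltwo$, so the displayed inequality immediately yields $\|\hbeta\|_\ltwo \geq \|\hbeta^\LARS\|_\ltwo$ for every lasso solution $\hbeta$. There is no serious obstacle here; the only point requiring care is the orthogonal-decomposition observation that $\hbeta^\LARS_\cE \in \row(X_\cE)$, which is exactly what makes the LARS choice $b=0$ the minimum $\ell_2$ norm member of the solution family.
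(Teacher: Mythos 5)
Your proof is correct and is essentially the paper's own argument: the paper likewise decomposes any solution as $\hbeta^\LARS_\cE + b$ with $b \in \nul(X_\cE)$ via \eqref{eq:lassosol} and applies the Pythagorean identity, with the orthogonality $\hbeta^\LARS_\cE \in \row(X_\cE) \perp \nul(X_\cE)$ left implicit where you spell it out. Your added observation that $\col\big((X_\cE)^+\big)=\row(X_\cE)$ is exactly the detail justifying the paper's one-line computation.
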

\begin{proof}
From \eqref{eq:lassosol}, we can see that any
lasso solution has squared $\ell_2$ norm
\begin{equation*}
\|\hbeta\|_\ltwo^2 = 
\big\|(X_\cE)^+\big(y-(X_\cE\T)^+\lambda s\big)\big\|_\ltwo^2 + \|b\|_\ltwo^2,
\end{equation*}
since $b \in \nul(X_\cE)$. Hence
$\|\hbeta\|_\ltwo^2 \geq \|\hbeta^\LARS\|_\ltwo^2$, 
with equality if and only if $b=0$. 
\end{proof}

Mixing together the $\ell_1$ and $\ell_2$ norms brings 
to mind the elastic net \cite{enet}, which penalizes both 
the $\ell_1$ norm and the squared $\ell_2$ norm of the coefficient vector. The 
elastic net utilizes two tuning parameters $\lambda_1,\lambda_2 \geq 0$ (this 
notation should not to be confused with the knots in the LARS lasso path), and 
solves the criterion\footnote{This is actually what 
\citeasnoun{enet} call the ``naive'' elastic net solution, and the modification
$(1+\lambda_2)\hbeta^\mathrm{EN}$ is what the authors
refer to as the elastic net estimate. But in the limit as $\lambda_2 \rightarrow 0^+$,
these two estimates are equivalent, so our result in Lemma \ref{lem:larsenet} holds for 
this modified estimate as well.}
\begin{equation}
\label{eq:enet}
\hbeta^\mathrm{EN} \in \argmin_{\beta \in \R^p} \, 
\half \|y-X\beta\|_\ltwo^2 + \lambda_1 \|\beta\|_\lone 
+ \frac{\lambda_2}{2} \|\beta\|_\ltwo^2.
\end{equation}
For any $\lambda_2>0$, the elastic net solution 
$\hbeta^{\mathrm{EN}}=\hbeta^{\mathrm{EN}}(\lambda_1,\lambda_2)$
is unique, since the criterion is strictly convex. 

Note that if $\lambda_2=0$, then \eqref{eq:enet} is just the lasso problem. 
On the other hand, if $\lambda_1=0$, then \eqref{eq:enet} reduces 
to ridge regression. It is well-known that the ridge regression 
solution $\hbeta^{\mathrm{ridge}}(\lambda_2)
= \hbeta^\mathrm{EN}(0,\lambda_2)$ converges to the minimum $\ell_2$ norm
least squares solution as $\lambda_2 \rightarrow 0^+$.
Our next result is analogous to
this fact: it says that for any fixed $\lambda_1>0$, the elastic net solution 
converges to the minimum $\ell_2$ norm lasso solution---that is, the LARS 
lasso solution---as $\lambda_2 \rightarrow 0^+$,

\begin{lemma}
\label{lem:larsenet}
Fix any $X$ and $\lambda_1 > 0$. For almost every $y \in \R^n$,
the elastic net solution converges to the LARS lasso solution
as $\lambda_2 \rightarrow 0^+$, that is,
\begin{equation*}
\lim_{\lambda_2 \rightarrow 0^+}
\hbeta^{\mathrm{EN}}(\lambda_1,\lambda_2) = 
\hbeta^\LARS(\lambda_1).
\end{equation*}
\end{lemma}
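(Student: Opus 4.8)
The plan is to treat the squared-$\ell_\ltwo$ term in the elastic net as a Tikhonov tie-breaker: for small $\lambda_2$ it perturbs the lasso criterion just enough to select, out of the whole set of lasso minimizers, the one of least $\ell_\ltwo$ norm, which by Lemma \ref{lem:larsminl2} is precisely the LARS lasso solution. Write $g(\beta)=\half\|y-X\beta\|_\ltwo^2+\lambda_1\|\beta\|_\lone$ for the lasso criterion and let $g^\star$ be its minimum value. Since $\lambda_1>0$, the criterion $g$ has no directions of recession (as in the proof of Lemma \ref{lem:basic}), so its solution set $S$ is nonempty, convex, closed and bounded, hence compact. Because $\|\cdot\|_\ltwo^2$ is strictly convex and $S$ is compact and convex, $S$ has a unique minimum $\ell_\ltwo$ norm element, which I will call $\hbeta^\star$; by Lemma \ref{lem:larsminl2}, $\hbeta^\star=\hbeta^\LARS(\lambda_1)$. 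The goal is thus to show $\hbeta^{\mathrm{EN}}(\lambda_1,\lambda_2)\to\hbeta^\star$ as $\lambda_2\to 0^+$.

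First I would establish a uniform bound. Comparing the elastic net value at $\hbeta^{\mathrm{EN}}$ against its value at $\hbeta^\star$, and using $g(\hbeta^{\mathrm{EN}})\ge g^\star=g(\hbeta^\star)$, the first-order and second-order terms separate to give $\|\hbeta^{\mathrm{EN}}(\lambda_1,\lambda_2)\|_\ltwo\le\|\hbeta^\star\|_\ltwo$ for every $\lambda_2>0$; the same comparison also yields $0\le g(\hbeta^{\mathrm{EN}})-g^\star\le\frac{\lambda_2}{2}\|\hbeta^\star\|_\ltwo^2$, so $g(\hbeta^{\mathrm{EN}})\to g^\star$ as $\lambda_2\to 0^+$. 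These two facts are the whole engine of the proof.

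Then I would argue by subsequences. Any sequence $\lambda_2^{(k)}\to 0^+$ gives a bounded sequence $\hbeta^{\mathrm{EN}}(\lambda_1,\lambda_2^{(k)})$, so it has a convergent subsequence with some limit $\bar\beta$. Continuity of $g$ together with $g(\hbeta^{\mathrm{EN}})\to g^\star$ forces $g(\bar\beta)=g^\star$, i.e. $\bar\beta\in S$; and passing to the limit in the uniform bound gives $\|\bar\beta\|_\ltwo\le\|\hbeta^\star\|_\ltwo$. By uniqueness of the minimum $\ell_\ltwo$ norm element of $S$, this forces $\bar\beta=\hbeta^\star$. Since every subsequential limit equals $\hbeta^\star$ and the sequence is bounded, the full sequence converges to $\hbeta^\star=\hbeta^\LARS(\lambda_1)$, as desired.

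The steps here are individually routine, so the main point is really the setup: recognizing that the squared-$\ell_\ltwo$ penalty performs minimum-norm selection, and that Lemma \ref{lem:larsminl2} identifies that minimum-norm solution with the LARS solution. It is worth noting that this argument uses nothing special about $y$: it establishes the convergence for \emph{every} $y\in\R^n$, so the almost-everywhere qualifier in the statement is not needed for this particular approach. The one place to take care is the compactness of $S$ (which is where the hypothesis $\lambda_1>0$ enters) and the uniqueness of its least-norm element, both of which I would verify explicitly before running the subsequence argument.
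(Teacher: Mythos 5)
Your proof is correct, and it takes a genuinely different route from the paper's. The paper works through the KKT conditions: fixing $y$ outside the measure-zero set $\cN$ of Lemma \ref{lem:actbig} (so that the LARS solution has strictly nonzero coefficients with signs $s$ on all of $\cE$), it posits the explicit candidate $\hbeta^{\mathrm{EN}}_\cE(\lambda_1,\lambda_2) = (X_\cE\T X_\cE + \lambda_2 I)^{-1}(X_\cE\T y - \lambda_1 s)$, verifies the elastic net KKT conditions for all sufficiently small $\lambda_2>0$ (the strict inequalities $\sign(f(\lambda_2)_i)=s_i$ and $\|X_{-\cE}\T(y-X_\cE f(\lambda_2))\|_\linf<\lambda_1$ survive small perturbations by continuity), and concludes by continuity of $\lambda_2 \mapsto (X_\cE\T X_\cE+\lambda_2 I)^{-1}$ at $0$ toward the pseudoinverse. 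Your argument is instead a soft variational (Tikhonov selection) argument: the comparison $g(\hbeta^{\mathrm{EN}})+\frac{\lambda_2}{2}\|\hbeta^{\mathrm{EN}}\|_\ltwo^2 \leq g(\hbeta^\star)+\frac{\lambda_2}{2}\|\hbeta^\star\|_\ltwo^2$ together with $g(\hbeta^{\mathrm{EN}})\geq g^\star$ does yield both $\|\hbeta^{\mathrm{EN}}\|_\ltwo \leq \|\hbeta^\star\|_\ltwo$ and $g(\hbeta^{\mathrm{EN}})\to g^\star$, the solution set $S$ is compact because $\lambda_1>0$ forces $g(\beta)\geq\lambda_1\|\beta\|_\lone$, and your subsequence step is sound. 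Each approach buys something the other does not. Yours is stronger in scope: since Lemma \ref{lem:larsminl2} identifies the LARS solution as the minimum $\ell_2$ norm lasso solution for \emph{every} $y$ (its proof needs no genericity), your argument proves convergence for all $y\in\R^n$, showing the almost-everywhere qualifier in the statement is an artifact of the paper's proof technique, which needs $y\notin\cN$ precisely so that the candidate's signs are strict on all of $\cE$. The paper's proof, in exchange, is more informative where it applies: it produces a closed-form expression for the elastic net solution for all small $\lambda_2$, and shows that its active set and signs stabilize at $\cE,s$ before the limit is taken, facts your compactness argument cannot see. Your proof also transfers immediately to the rescaled estimate $(1+\lambda_2)\hbeta^{\mathrm{EN}}$ mentioned in the paper's footnote, since the rescaling factor tends to $1$.
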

\begin{proof}
By Lemma \ref{lem:actbig}, we know that for any $y \notin \cN$,
where $\cN \subseteq \R^n$ is a set of measure zero, the LARS lasso at $\lambda_1$
satisfies $\hbeta^\LARS(\lambda_1)_i \not= 0$ for all $i \in \cE$. Hence fix 
$y \notin \cN$. First note that we can rewrite the LARS lasso solution as
\begin{equation*}
\hbeta^\LARS_{-\cE}(\lambda_1)=0 \;\;\;\text{and}\;\;\;
\hbeta^\LARS_\cE(\lambda_1) = (X_\cE\T X_\cE)^+(X_\cE\T y - \lambda_1 s).
\end{equation*}
Define the function 
\begin{align*}
f(\lambda_2) &= (X_\cE\T X_\cE + \lambda_2 I)^{-1}
(X_\cE\T y - \lambda_1 s) \;\;\;\text{for}\;\,\lambda_2>0,\\
f(0) &= (X_\cE\T X_\cE)^+ (X_\cE\T y - \lambda_1 s).
\end{align*}
For fixed $\cE,s$, the function $f$ is continuous on
$[0,\infty)$ (continuity at $0$ can be verified, for example, by looking at 
the singular value decomposition of $(X_\cE\T X_\cE + \lambda_2I)^{-1}$.) 
Hence it suffices to show that for small enough $\lambda_2>0$, the elastic 
net solution at $\lambda_1,\lambda_2$ is given by
\begin{equation*}
\hbeta^{\mathrm{EN}}_{-\cE}(\lambda_1,\lambda_2)=0 \;\;\;\text{and}\;\;\;
\hbeta^{\mathrm{EN}}_\cE(\lambda_1,\lambda_2) = f(\lambda_2).
\end{equation*}

To this end, we show that the above proposed solution satisfies
the KKT conditions for small enough $\lambda_2$. The KKT conditions
for the elastic net problem are
\begin{gather}
\label{eq:enetkkt}
X\T (y-X\hbeta^{\mathrm{EN}}) - \lambda_2 \hbeta^{\mathrm{EN}} 
= \lambda_1 \gamma, \\
\label{eq:enetsg}
\gamma_i \in \begin{cases}
\{\sign(\hbeta^{\mathrm{EN}}_i)\} & \text{if} \;\; 
\hbeta^{\mathrm{EN}}_i \not= 0 \\
[-1,1] & \text{if} \;\; \hbeta^{\mathrm{EN}}_i = 0
\end{cases}, \;\;\;\text{for}\;\, i=1,\ldots p.
\end{gather}
Recall that $f(0)=\hbeta^\LARS_\cE(\lambda_1)$ are the equicorrelation 
coefficients of the LARS lasso solution at $\lambda_1$. As $y \notin \cN$, we have 
$f(0)_i \not= 0$ for each $i \in \cE$, and further, $\sign(f(0)_i)=s_i$ for all 
$i \in \cE$. Therefore the continuity of $f$ implies that for small enough $\lambda_2$,
$f(\lambda_2)_i \not= 0$ and $\sign(f(\lambda_2)_i)=s_i$ for all $i \in \cE$.
Also, we know that $\|X_{-\cE}\T(y-X_\cE f(0))\|_\linf < \lambda_1$ by definition
of the equicorrelation set $\cE$, and again, the continuity of $f$ implies that for
small enough $\lambda_2$, $\|X_{-\cE}\T(y-X_\cE f(\lambda_2))\|_\linf < \lambda_1$.
Finally, direct calculcation shows that
\begin{align*}
X_\cE\T \big(y-X_\cE f(\lambda_2)\big) - \lambda_2 f(\lambda_2) &=
X_\cE\T y - (X_\cE\T X_\cE+\lambda_2 I)(X_\cE\T X_\cE+\lambda_2 I)^{-1} X_\cE\T y \,\,+ \\
&\;\;\;\;\; (X_\cE\T X_\cE+\lambda_2 I)(X_\cE\T X_\cE+\lambda_2 I)^{-1} \lambda_1 s \\
&= \lambda_1 s.
\end{align*}
This verifies the KKT conditions for small enough $\lambda_2$, and completes the 
proof.
\end{proof}

In Section \ref{sec:relprops}, we discuss a few more properties of LARS lasso
solutions, in the context of studying the various support sets of lasso 
solutions. In the next section, we present a simple method for computing lower 
and upper bounds on the coefficients of lasso solutions, useful when the 
solution is not unique.

\section{Lasso coefficient bounds}
\label{sec:coefbounds}

Here we again consider a general predictor matrix $X$ 
(not necessarily having columns in general position), so that the lasso
solution is not necessarily unique. We show that it is possible to compute lower and 
upper bounds on the coefficients of lasso solutions, for any given problem 
instance, using linear programming. We begin by revisiting the KKT conditions.

\subsection{Back to the KKT conditions}
\label{sec:lassoact}

The KKT conditions for the lasso problem were given in \eqref{eq:lassokkt}
and \eqref{eq:lassosg}. Recall that the lasso fit $X\hbeta$ is always unique,
by Lemma \ref{lem:basic}. Note that when $\lambda>0$, we can rewrite
\eqref{eq:lassokkt} as 
\begin{equation*}
\gamma = \frac{1}{\lambda}X\T(y-X\hbeta),
\end{equation*}
implying
that the optimal subgradient $\gamma$ is itself unique. According to its definition 
\eqref{eq:lassosg}, the components of $\gamma$ give the signs of nonzero coefficients of 
any lasso solution, and therefore the uniqueness of $\gamma$ immediately implies the 
following result. 

\begin{lemma}
\label{lem:common}
For any $y,X$, and $\lambda>0$, any two lasso
solutions $\hbeta^{(1)}$ and $\hbeta^{(2)}$ must satisfy
$\hbeta^{(1)}_i \cdot \hbeta^{(2)}_i \geq 0$ for 
$i=1,\ldots p$. In other words, any two lasso solutions
must have the same signs over their common support.
\end{lemma}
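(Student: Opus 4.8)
The plan is to exploit the uniqueness of the optimal subgradient $\gamma$, which was just established immediately before the lemma statement. The entire proof rests on the observation that when $\lambda > 0$, the KKT stationarity condition \eqref{eq:lassokkt} can be solved for $\gamma$ as $\gamma = \frac{1}{\lambda} X\T(y - X\hbeta)$. Since every lasso solution yields the same fit $X\hbeta$ by Lemma \ref{lem:basic}(ii), this formula shows $\gamma$ takes the same value for all solutions. So the key ingredient is already in hand.

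First I would fix two lasso solutions $\hbeta^{(1)}$ and $\hbeta^{(2)}$ at the same $y, X, \lambda > 0$, and let $\gamma$ denote the common optimal subgradient that both must satisfy via \eqref{eq:lassosg}. Then I would fix an arbitrary coordinate $i \in \{1, \ldots, p\}$ and argue coordinate-wise. The subgradient condition \eqref{eq:lassosg} forces that whenever a coefficient is nonzero, its sign must equal $\gamma_i$: concretely, if $\hbeta^{(1)}_i \neq 0$ then $\sign(\hbeta^{(1)}_i) = \gamma_i$, and likewise if $\hbeta^{(2)}_i \neq 0$ then $\sign(\hbeta^{(2)}_i) = \gamma_i$.

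The conclusion $\hbeta^{(1)}_i \cdot \hbeta^{(2)}_i \geq 0$ then follows by a short case analysis. If either coefficient is zero the product is zero and the inequality holds trivially. If both are nonzero, then both have sign equal to the common value $\gamma_i$, so they share the same sign and their product is strictly positive. This handles all cases, and since $i$ was arbitrary the claim holds for every coordinate.

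There is no real obstacle here; the lemma is essentially a direct corollary of the uniqueness of $\gamma$ together with Lemma \ref{lem:basic}(ii). The only point requiring any care is to make explicit that $\gamma_i$ is genuinely the \emph{same} number for both solutions---this is precisely what the formula $\gamma = \frac{1}{\lambda} X\T(y - X\hbeta)$ guarantees, since the right-hand side depends on $\hbeta$ only through the fit $X\hbeta$, which is invariant across solutions. I would state that dependence explicitly at the outset so the sign-matching step is airtight, and then the coordinate-wise case analysis finishes the proof in a few lines.
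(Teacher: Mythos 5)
Your proof is correct and follows essentially the same route as the paper: the text immediately preceding the lemma derives $\gamma = \frac{1}{\lambda}X\T(y-X\hbeta)$, notes that uniqueness of the fit (Lemma \ref{lem:basic}(ii)) makes $\gamma$ unique, and concludes via the subgradient condition \eqref{eq:lassosg} exactly as you do. Your coordinate-wise case analysis just spells out the final step the paper treats as immediate.
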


In a sense, this result is reassuring---it says that even
when the lasso solution is not necessarily unique, lasso coefficients 
must maintain consistent signs. Note that the same is certainly not true of least
squares solutions (corresponding to $\lambda=0$), which causes problems for 
interpretation, as mentioned in the introduction.
Lemma \ref{lem:common} will be helpful when we derive lasso coefficient bounds 
shortly.

We also saw in the introduction that different
lasso solutions (at the same $y,X,\lambda$) can have 
different supports, or active sets. The previously derived characterization
of lasso solutions, given in \eqref{eq:lassosol} and \eqref{eq:lassosign},
provides an understanding of how this is possible. It helps to rewrite
\eqref{eq:lassosol} and \eqref{eq:lassosign} as
\begin{equation}
\label{eq:lassosol2}
\hbeta_{-\cE} = 0 \;\;\;\text{and}\;\;\;
\hbeta_\cE = \hbeta^\LARS_\cE + b,
\end{equation}
where $b$ is subject to
\begin{equation}
\label{eq:lassosign2}
b \in \nul(X_\cE) \;\;\;\text{and}\;\;\;
s_i \cdot (\hbeta^\LARS_i + b_i) \geq 0,
\;\, i\in \cE,
\end{equation}
and $\hbeta^\LARS$ is the fundamental solution traced by the
LARS algorithm, as given in \eqref{eq:larssol}. Hence for 
for a lasso solution $\hbeta$ to have an active set $\cA=\supp(\hbeta)$, 
we can see that we must have $\cA \subseteq \cE$ and 
$\hbeta_\cE = \hbeta^\LARS_\cE+b$, where $b$ satisfies 
\eqref{eq:lassosign2} and also
\begin{align*}
b_i &= -\hbeta^\LARS_i \;\;\;\text{for}\;\, 
i \notin \cE\setminus\cA,\\
b_i &\not= -\hbeta^\LARS_i \;\;\;\text{for}\;\, 
i \in \cE\setminus\cA.
\end{align*}
As we discussed in the introduction,
the fact that there may be different active sets corresponding to
different lasso solutions (at the same $y,X,\lambda$) is 
perhaps concerning, because different active sets provide different 
``stories'' regarding which predictor variables are important. One 
might ask: given a specific variable of 
interest $i \in \cE$ (recalling that all variables outside of $\cE$ necessarily 
have zero coefficients), is it possible for the $i$th coefficient to be 
nonzero at one lasso solution but zero at another? The answer to this
question depends on the interplay between the constraints in 
\eqref{eq:lassosign2}, and as we show next, it is achieved
by solving a simple linear program.

\subsection{The polytope of solutions and lasso coefficient bounds}
\label{sec:lassopoly}

The key observation here is that the set of lasso solutions 
defined by \eqref{eq:lassosol2} and \eqref{eq:lassosign2} 
forms a convex polytope. Consider writing the set
of lasso solutions as
\begin{equation}
\label{eq:lassopoly} 
\hbeta_{-\cE}=0 \;\;\;\text{and}\;\;\; 
\hbeta_\cE \in 
K = \{x \in \R^{|\cE|} : Px = \hbeta^\LARS_\cE,\, 
Sx \geq 0 \},
\end{equation}
where $P = P_{\mathrm{row}(X_\cE)}$ and $S = \diag(s)$.
That \eqref{eq:lassopoly} is equivalent to
\eqref{eq:lassosol2} and \eqref{eq:lassosign2} follows
from the fact that $\hbeta^\LARS_\cE \in \row(X_\cE)$, 
hence $Px=\hbeta^\LARS_\cE$ if and only if
$x = \hbeta^\LARS_\cE+b$ for some
$b \in \nul(X_\cE)$.

The set $K \subseteq \R^{|\cE|}$ is a polyhedron, since it
is defined by linear equalities and inequalities, and furthermore
it is bounded, as all lasso solutions have the same $\ell_1$ norm
by Lemma \ref{lem:basic}, making it a polytope. The component-wise
extrema of $K$ can be easily computed via linear programming.
In other words, for $i \in \cE$, we can solve the following two linear 
programs:
\begin{align}
\label{eq:lassolo}
\hbeta^\mathrm{lower}_i &=
\min_{x \in \R^{|\cE|}} \, x_i
\;\;\text{subject to}\;\; 
Px = \hbeta^\LARS_\cE, \, Sx \geq 0, \\
\label{eq:lassohi}
\hbeta^\mathrm{upper}_i &=
\max_{x \in \R^{|\cE|}} \, x_i
\;\;\text{subject to}\;\; 
Px = \hbeta^\LARS_\cE, \, Sx \geq 0,
\end{align}
and then we know that the $i$th component of any lasso solution satisfies 
$\hbeta_i \in [\hbeta^\mathrm{lower}_i, \hbeta^\mathrm{upper}_i]$. These bounds are 
tight, in the sense that each is achieved by the $i$th component of some lasso solution
(in fact, this solution is just the minimizer of \eqref{eq:lassolo}, or the maximizer of 
\eqref{eq:lassohi}). By the convexity of $K$, every value between $\hbeta^\mathrm{lower}_i$
and $\hbeta^\mathrm{upper}_i$ is also achieved by the $i$th component of some lasso solution.
Most importantly, the linear programs \eqref{eq:lassolo} and \eqref{eq:lassohi}
can actually be solved in practice. Aside from the obvious dependence on $y,X$, and 
$\lambda$, the relevant quantities $P,S$, and $\hbeta^\LARS_\cE$ only depend on the 
equicorrelation set $\cE$ and signs $s$, which in turn only depend on the unique lasso 
fit. Therefore, one could compute any lasso solution (at $y,X,\lambda$) in order to 
define $\cE,s$, and subsequently $P,S$ and $\hbeta^\LARS_\cE$, all that is
needed in order to solve \eqref{eq:lassolo} and \eqref{eq:lassohi}. We summarize this idea below.

\begin{algorithm}[\textbf{Lasso coefficient bounds}]
\label{alg:coefbounds}
\hfill\par
\smallskip
\smallskip
Given $y,X$, and $\lambda>0$.
\begin{enumerate}
\item Compute any solution $\hbeta$ of the lasso 
problem (at $y,X,\lambda$), to obtain the unique
lasso fit $X\hbeta$.
\item Define the equicorrelation set $\cE$ and
signs $s$, as in \eqref{eq:equiset} and 
\eqref{eq:equisigns}, respectively.
\item Define $P=P_{\mathrm{row}(X_\cE)},\,
S = \diag(s)$, and $\hbeta^\LARS_\cE$
according to \eqref{eq:larssol}.
\item For each $i \in \cE$, compute the coefficient
bounds $\hbeta^\mathrm{lower}_i$ and 
$\hbeta^\mathrm{upper}_i$ by solving the linear
programs \eqref{eq:lassolo} and \eqref{eq:lassohi},
respectively.
\end{enumerate}
\end{algorithm}

Lemma \ref{lem:common} implies a valuable property of the bounding 
interval $[\hbeta^\mathrm{lower}_i,\hbeta^\mathrm{upper}_i]$,
namely, that this interval cannot contain zero in its interior. Otherwise, 
there would be a pair of lasso solutions with opposite signs over the 
$i$th component, contradicting the lemma. Also, we know from Lemma 
\ref{lem:basic} that all lasso solutions have the same $\ell_1$ norm $L$,
and this means that $|\hbeta^\mathrm{lower}_i|,|\hbeta^\mathrm{upper}_i|
\leq L$. Combining these two properties gives the next lemma.

\begin{lemma}
\label{lem:bounds}
Fix any $y,X$, and $\lambda>0$. Let $L$ be
the common $\ell_1$ norm of lasso solutions at $y,X,\lambda$.
Then for any $i \in \cE$, the coefficient bounds 
$\hbeta^\mathrm{lower}_i$ and $\hbeta^\mathrm{upper}_i$ 
defined in \eqref{eq:lassolo} and \eqref{eq:lassohi}
satisfy
\begin{gather*}
[\hbeta^\mathrm{lower}_i,\hbeta^\mathrm{upper}_i]
\subseteq [0,L] \;\;\; \text{if}\;\; s_i > 0, \\
[\hbeta^\mathrm{lower}_i,\hbeta^\mathrm{upper}_i]
\subseteq [-L,0] \;\;\;\text{if}\;\; s_i < 0.
\end{gather*}
\end{lemma}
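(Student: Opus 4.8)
The plan is to combine the two ingredients the text has already assembled: the sign
consistency from Lemma \ref{lem:common}, and the common $\ell_1$ norm from Lemma
\ref{lem:basic}. Fix $i \in \cE$ and suppose first that $s_i > 0$. Recall that $s_i$ is the
sign the subgradient $\gamma$ assigns to coordinate $i$, so by \eqref{eq:lassosg} every
lasso solution $\hbeta$ with $\hbeta_i \not= 0$ must satisfy $\sign(\hbeta_i) = s_i > 0$,
i.e. $\hbeta_i > 0$; and of course $\hbeta_i = 0$ is also allowed. Hence no lasso solution
can have $\hbeta_i < 0$, which immediately forces $\hbeta^\mathrm{lower}_i \geq 0$ (since
$\hbeta^\mathrm{lower}_i$ is achieved as the $i$th component of an actual solution, by the
tightness noted after \eqref{eq:lassohi}). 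This is really just the content of Lemma
\ref{lem:common} specialized to a single fixed sign, but stating it directly via the
subgradient is the cleanest route.

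For the upper end of the interval, I would use the common $\ell_1$ norm $L$. Since
$\hbeta^\mathrm{upper}_i$ equals $\hbeta_i$ for some lasso solution $\hbeta$, and every
solution satisfies $\|\hbeta\|_\lone = L$ by Lemma \ref{lem:basic}(iii), we have
$|\hbeta_i| \leq \sum_{j} |\hbeta_j| = \|\hbeta\|_\lone = L$, and therefore
$\hbeta^\mathrm{upper}_i \leq L$. Combining with $\hbeta^\mathrm{lower}_i \geq 0$ and the
trivial $\hbeta^\mathrm{lower}_i \leq \hbeta^\mathrm{upper}_i$ gives
$[\hbeta^\mathrm{lower}_i, \hbeta^\mathrm{upper}_i] \subseteq [0,L]$ when $s_i > 0$. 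The case
$s_i < 0$ is symmetric: now $\sign(\hbeta_i) = s_i < 0$ whenever $\hbeta_i \not= 0$, so no
solution is positive in coordinate $i$, giving $\hbeta^\mathrm{upper}_i \leq 0$, while the
$\ell_1$ bound gives $\hbeta^\mathrm{lower}_i \geq -L$, so the interval sits inside $[-L,0]$.

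There is no real obstacle here; the lemma is essentially a bookkeeping corollary of results
already proved. The only point requiring a word of care is the claim that the computed
extrema $\hbeta^\mathrm{lower}_i, \hbeta^\mathrm{upper}_i$ are genuinely attained by the $i$th
components of bona fide lasso solutions (rather than merely being infima/suprema), so that
the sign and norm constraints valid for solutions transfer to the endpoints. That fact was
already recorded in the discussion following \eqref{eq:lassohi}, where it is observed that
the optimum of each linear program is achieved at a point of the polytope $K$, i.e. by an
actual solution. Given that, the argument is immediate, and I would present it in just the
few lines above.
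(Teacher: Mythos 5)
Your proof is correct and takes essentially the same route as the paper, which likewise combines the sign consistency of Lemma \ref{lem:common} (equivalently, the uniqueness of the optimal subgradient $\gamma$ with $\gamma_\cE=s$) with the common $\ell_1$ norm $L$ from Lemma \ref{lem:basic}, together with the fact that the LP optima are attained on the polytope $K$. Your direct appeal to the subgradient sign condition merely makes explicit which side of zero the interval lies on, a point the paper leaves implicit when it combines ``the interval cannot contain zero in its interior'' with $|\hbeta^\mathrm{lower}_i|,|\hbeta^\mathrm{upper}_i|\leq L$.
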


Using Algorithm \ref{alg:coefbounds}, we can 
identify all variables $i \in \cE$ with one of two categories, based 
on their bounding intervals:
\begin{itemize}
\item[(i)] If $0 \in [\hbeta^\mathrm{lower}_i,\hbeta^\mathrm{upper}_i]$,
then variable $i$ is called {\it dispensable} (to the lasso model at
$y,X,\lambda$), because there is a solution that does not include this
variable in its active set. By Lemma \ref{lem:bounds}, this can only
happen if $\hbeta^\mathrm{lower}_i=0$ or $\hbeta^\mathrm{upper}_i=0$.
\item[(ii)] If $0 \notin [\hbeta^\mathrm{lower}_i,\hbeta^\mathrm{upper}_i]$,
then variable $i$ is called {\it indispensable} (to the lasso model at
$y,X,\lambda$), because every solution includes this variable in its active 
set. By Lemma \ref{lem:bounds}, this can only happen if $\hbeta^\mathrm{lower}_i>0$ 
or $\hbeta^\mathrm{upper}_i<0$.
\end{itemize}

It is helpful to return to the example discussed in the introduction. 
Recall that in this example we took $n=5$ and $p=10$, and for a given $y,X$, and 
$\lambda=1$, we found two lasso solutions: one supported on variables $\{1,2,3,4\}$, 
and another supported on variables $\{1,2,3\}$. In the introduction, we purposely
did not reveal the structure of the predictor matrix $X$;
given what we showed in Section \ref{sec:unique} (that $X$ having columns in 
general position implies a unique lasso solution), it should not be surprising
to find out that here we have $X_4 = (X_2+X_3)/2$. A complete
description of our construction of $X$ and $y$ is as follows:
we first drew the components of the columns $X_1,X_2,X_3$ independently from a standard 
normal distribution, and then defined $X_4 = (X_2+X_3)/2$. We also drew the components 
of $X_5,\ldots X_{10}$ independently from a standard normal distribution, and then 
orthogonalized $X_5,\ldots X_{10}$ with respect to the linear subspace spanned by 
$X_1,\ldots,X_4$. Finally, we defined $y = -X_1+X_2+X_3$. 
The purpose of this construction was to make it easy to detect the relevant variables 
$X_1,\ldots X_4$ for the linear model of $y$ on $X$.

According to the terminology defined above, variable 4 is dispensable to the lasso
model when $\lambda=1$, because it has a nonzero coefficient at one solution but a zero
coefficient at another. This is perhaps not surprising, as $X_2,X_3,X_4$ are linearly 
dependent. How about the other variables? We ran Algorithm \ref{alg:coefbounds} to
answer this question. The results are displayed in Table \ref{tab:smallex}.

\begin{table}[htb]
\begin{center}
\renewcommand*\arraystretch{1.3}
\begin{tabular}{|c|c|c|c|}
\hline
$i$ & $\hbeta^\mathrm{lower}_i$ &
$\hbeta^\LARS_i$ & $\hbeta^\mathrm{upper}_i$ \\
\hline
$1$ & $-0.8928$ & $-0.8928$ & $-0.8928$ \\
\hline
$2$ & $0.2455$ & $0.6201$ & $0.8687$ \\
\hline
$3$ & $0$ & $0.3746$ & $0.6232$ \\
\hline
$4$ & $0$ & $0.4973$ & $1.2465$ \\
\hline
\end{tabular}
\renewcommand*\arraystretch{1}
\caption[tab:smallex]{\small\it The results 
of Algorithm \ref{alg:coefbounds} for the small example
from the introduction, with $n=5$, $p=8$. Shown are the lasso 
coefficient bounds over the equicorrelation set $\cE=\{1,2,3,4\}$.}
\label{tab:smallex}
\end{center}
\end{table}

For the given $y,X$, and $\lambda=1$, the equicorrelation set is $\cE=\{1,2,3,4\}$,
and the sign vector is $s=(-1,1,1,1)\T$ (these are given by running Steps 1 and 2 of 
Algorithm \ref{alg:coefbounds}). Therefore we know that any lasso solution has zero
coefficients for variables $5, \ldots 10$, has a nonpositive first coefficient,
and has nonnegative coefficients for variables $2,3,4$.
The third column of Table \ref{tab:smallex}
shows the LARS lasso solution over the equicorrelation variables. The second and fourth 
columns show the component-wise coefficient bounds $\hbeta^\mathrm{lower}_i$ and 
$\hbeta^\mathrm{upper}_i$, respectively, for $i \in \cE$. We see that variable 3 
is dispensable, because it has a lower bound of zero, meaning that there exists a 
lasso solution that excludes the third variable from its active set (and this solution 
is actually computed by Algorithm \ref{alg:coefbounds}, as it is the minimizer
of the linear program \eqref{eq:lassolo} with $i=3$). 
The same conclusion holds for variable 4. On the other hand, variables 1 and 2 are 
indispensable, because their bounding intervals do not contain zero.

Like variables 3 and 4, variable 2 is linearly dependent on the other variables 
(in the equicorrelation set), but unlike variables 3 and 4, it is indispensable and 
hence assigned a 
nonzero coefficient in every lasso solution. This is the first of a few interesting 
points about dispensability and indispensability, which we discuss below.

\begin{itemize}
\item {\it Linear dependence does not imply dispensability.}
In the example, variable 2 is indispensable, as its 
coefficient has a lower bound of $0.2455>0$,
even though variable 2 is a linear function of variables
3 and 4. Note that in order for the 2nd variable to be
dispensable, we need to be able to use the 
others (variables 1, 3, and 4) to achieve both
the same fit and the same $\ell_1$ norm of the coefficient 
vector. The fact that variable 2 can be written as a linear function
of variables 3 and 4 implies that we can preserve the fit, but not
necessarily the $\ell_1$ norm, with zero weight on variable 2.
Table \ref{tab:smallex} says
that we can make the weight on variable 2 as small 
as $0.2455$ while keeping the fit and the $\ell_1$ norm 
unchanged, but that moving it below $0.2455$ (and maintaining the 
same fit) inflates the $\ell_1$ norm.

\item {\it Linear independence implies indispensability
(almost everywhere).} 
In the next section we show that, given any $X$ and $\lambda$, and
almost every $y\in\R^n$, the quantity $\col(X_\cA)$ is invariant 
over all active sets coming from lasso solutions at $y,X,\lambda$. 
Therefore, almost everywhere in $y$, if variable $i \in \cE$ is linearly 
independent of all $j \notin \cE$ (meaning that $X_i$ cannot be expressed 
as a linear function of $X_j$, $j\notin\cE$), then variable $i$ must be
indispensable---otherwise the span of the active variables 
would be different for different active sets.

\item {\it Individual dispensability does not imply 
pairwise dispensability.} Back to the above example, variables 3 and
4 are both dispensable, but this does not necessarily mean that there 
exists a lasso solution that exludes both 3 and 4 simultaneously from
the active set. Note that the computed solution that achieves a value of zero 
for its 3rd coefficient (the minimizer of \eqref{eq:lassolo} for 
$i=3$) has a nonzero 4th coefficient, and the computed solution 
that achieves zero for its 4th coefficient (the minimizer of 
\eqref{eq:lassolo} for $i=4$) has a nonzero 3rd coefficient. While this suggests
that variables 3 and 4 cannot simultaneously be zero for the current problem, it 
does not serve as definitive proof of such a claim. However, we can check this 
claim by solving \eqref{eq:lassolo}, with $i=4$, subject to the additional
constraint that $x_3=0$. This does in fact yield a positive lower bound, proving
that variables 3 and 4 cannot both be zero at a solution. Furthermore, moving beyond 
pairwise interactions, we can actually enumerate all possible active sets of lasso 
solutions, by recognizing that there is a one-to-one correspondence between active 
sets and faces of the polytope $K$; see Appendix \ref{app:allacts}.
\end{itemize}

Next, we cover some properties of lasso solutions that relate to our work
in this section and in the previous two sections, on uniqueness and non-uniqueness.

\section{Related properties}
\label{sec:relprops}

We present more properties of lasso solutions, relating to issues of
uniqueness and non-uniqueness. The first three sections examine the active sets 
generated by lasso solutions of a given problem instance, when $X$ is a 
general predictor matrix. The results in these three sections are reviewed 
from the literature. In the last section, we give a necessary condition for the 
uniqueness of the lasso solution. 

\subsection{The largest active set}
\label{sec:actbig}

For an arbitrary $X$, recall from Section \ref{sec:coefbounds} that the active set
$\cA$ of any lasso solution is necessarily contained in the equicorrelation set $\cE$. 
We show that the LARS lasso solution has support on all of $\cE$, making it the 
lasso solution with the largest support, for almost every $y \in \R^n$. This result
appeared in \citeasnoun{lassodf2}.

\begin{lemma}
\label{lem:actbig}
Fix any $X$ and $\lambda > 0$. For almost every 
$y \in \R^n$, the LARS lasso solution $\hbeta^\LARS$ has an active set $\cA$ equal 
to the equicorrelation set $\cE$, and therefore achieves the largest active set of
any lasso solution.
\end{lemma}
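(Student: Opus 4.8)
The plan is to reduce the statement to showing that, outside a Lebesgue-null set of $y$, every equicorrelation coordinate receives a \emph{nonzero} LARS coefficient. This suffices because every lasso solution has $\hbeta_{-\cE}=0$, so $\supp(\hbeta)\subseteq\cE$ always; thus $\cE$ is an upper bound on the support, and if $\hbeta^\LARS_i\not=0$ for all $i\in\cE$ then $\cA=\supp(\hbeta^\LARS)=\cE$ is the largest active set attained by any solution. The starting observation is that, by \eqref{eq:larssol}, the equicorrelation block can be written $\hbeta^\LARS_\cE=(X_\cE\T X_\cE)^+(X_\cE\T y-\lambda s)$, so for \emph{fixed} $\cE,s$ each coordinate $\hbeta^\LARS_i$ is an affine function of $y$, with linear part the $i$th row of $(X_\cE)^+$ and constant part $-\lambda[(X_\cE\T X_\cE)^+ s]_i$.

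The central difficulty is that $\cE$ and $s$ themselves depend on $y$, so one cannot simply freeze them. I would circumvent this exactly as in the uniqueness argument of Section \ref{sec:uniquesuff}: there are only finitely many candidate pairs $(\cE,s)$ with $\cE\subseteq\{1,\ldots p\}$ and $s\in\{-1,1\}^{|\cE|}$, so I can take a union over all of them. For each such combinatorial pair and each $i\in\cE$, define the affine set
\[ H_{\cE,s,i} = \big\{ y \in \R^n : [(X_\cE)^+ y]_i = \lambda\,[(X_\cE\T X_\cE)^+ s]_i \big\}. \]
The key step is to verify that each genuinely realizable $H_{\cE,s,i}$ is an honest hyperplane, i.e.\ that the $i$th row of $(X_\cE)^+$ does not vanish. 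This is where the only real computation lies: $[(X_\cE)^+ X_i]_i = \|P_{\row(X_\cE)} e_i\|_\ltwo^2$, which is strictly positive unless $e_i\in\nul(X_\cE)$, that is unless $X_i=0$; and $X_i\not=0$ is automatic for any $i$ that can actually belong to an equicorrelation set, since $i\in\cE$ forces $|X_i\T(y-X\hbeta)|=\lambda>0$. (For the degenerate combinatorial pairs having some $X_i=0$, $H_{\cE,s,i}$ is either empty or all of $\R^n$, but such a pair never arises as the true $\cE(y),s(y)$, so I discard it.) Hence every relevant $H_{\cE,s,i}$ is a hyperplane and has measure zero.

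Setting $\cN=\bigcup_{(\cE,s)}\bigcup_{i\in\cE} H_{\cE,s,i}$, a finite union of hyperplanes and therefore a null set, I would conclude as follows: fix any $y\notin\cN$ and let $\cE,s$ now denote the \emph{actual} equicorrelation set and signs at this $y$. For every $i\in\cE$ we have $y\notin H_{\cE,s,i}$ by construction, which says precisely that $\hbeta^\LARS_i\not=0$; hence $\supp(\hbeta^\LARS)=\cE$ and the claim follows. I expect the main obstacle to be purely conceptual rather than computational: it is the bookkeeping that lets a measure-zero conclusion proved for each \emph{fixed} $(\cE,s)$ transfer to the $y$-dependent $\cE(y),s(y)$. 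The finiteness of the collection of pairs $(\cE,s)$ is exactly what makes this transfer legitimate, and the nondegeneracy check on the $i$th row of $(X_\cE)^+$ is what guarantees each piece is null in the first place.
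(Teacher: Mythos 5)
Your proposal is correct and takes essentially the same route as the paper's own proof: your exceptional set $\cN=\bigcup_{(\cE,s)}\bigcup_{i\in\cE}H_{\cE,s,i}$, built as a finite union over all candidate pairs $(\cE,s)$ of the affine sets where the $i$th LARS coordinate vanishes, is exactly the set defined in \eqref{eq:n}, and your nondegeneracy argument (a zero $i$th row of $(X_\cE)^+$ would force $X_i=0$, impossible for a realizable equicorrelation set when $\lambda>0$) is precisely the paper's. If anything, your computation $[(X_\cE)^+X_\cE]_{ii}=\|P_{\row(X_\cE)}e_i\|_\ltwo^2$ spells out a step the paper only asserts.
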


\begin{proof}
For a matrix $A$, let $A_{[i]}$ denote its $i$th row. Define the set
\begin{equation}
\label{eq:n}
\cN = \bigcup_{\cE,s} \, \bigcup_{i \in \cE} \, \Big\{z \in \R^n :  
\big((X_\cE)^+\big)_{[i]} \big(z - (X_\cE\T)^+ \lambda s\big) = 0\Big\}.
\end{equation}
The first union above is taken over all subsets $\cE \subseteq
\{1,\ldots p\}$ and sign vectors $s \in \{-1,1\}^{|\cE|}$, but implicitly
we exclude sets $\cE$ such that $(X_\cE)^+$ has a row that is entirely
zero. Then $\cN$ has measure zero, because it is a finite union
of affine subspaces of dimension $n-1$.

Now let $y \notin \cN$. We know that no row of $(X_\cE)^+$ can be entirely
zero (otherwise, this means that $X_\cE$ has a zero column, implying
that $\lambda=0$ by definition of the equicorrelation set, contradicting the
assumption in the lemma). Then by construction we have that $\hbeta^\LARS_i 
\not= 0$ for all $i \in \cE$. 
\end{proof}

\noindent
{\it Remark 1.} In the case that the lasso solution is unique, this result 
says that the active set is equal to the equicorrelation set, almost everywhere.

\smallskip
\smallskip
\noindent
{\it Remark 2.} Note that the equicorrelation set $\cE$ (and hence the active set 
of a lasso solution, almost everywhere) can have size
$|\cE|=p$ in the worst case, even when $p>n$. As a trivial example, consider 
the case when $X \in \R^{n\times p}$ has $p$ duplicate columns, 
with $p>n$.

\subsection{The smallest active set}
\label{sec:actsmall}

We have shown that the LARS lasso solution attains the largest possible active 
set, and so a natural question is: what is the smallest possible active set?
The next result is from \citeasnoun{homotopy2} and \citeasnoun{boostpath}.

\begin{lemma}
\label{lem:actsmall}
For any $y,X$, and $\lambda > 0$, there exists
a lasso solution whose set of active variables is
linearly independent. In particular, this means
that there exists a solution whose active set $\cA$
has size $|\cA| \leq \min\{n,p\}$.
\end{lemma}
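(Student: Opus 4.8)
The plan is to begin from an arbitrary lasso solution and repeatedly shrink its support along directions in $\nul(X)$ until the active columns become linearly independent. By Lemma \ref{lem:basic} a solution $\hbeta$ exists; set $\cA = \supp(\hbeta)$. If the columns $\{X_i : i \in \cA\}$ are already linearly independent, there is nothing to prove. Otherwise linear dependence furnishes a nonzero $v \in \R^p$ with $\supp(v) \subseteq \cA$ and $Xv = 0$.

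The key observation is that perturbing $\hbeta$ along $v$ leaves the fit untouched, since $X(\hbeta + tv) = X\hbeta$ for every $t$. Consequently the residual $y - X(\hbeta + tv)$, and hence the optimal subgradient $\gamma$ pinned down by \eqref{eq:lassokkt}, are frozen. So to verify that $\hbeta + tv$ is again a lasso solution I would only need the sign condition \eqref{eq:lassosg}: for each coordinate $i$ with $(\hbeta + tv)_i \neq 0$ I need $\sign((\hbeta+tv)_i) = \gamma_i = s_i$. For $t$ near $0$ this holds because $\hbeta_i \neq 0$ on $\cA$, while any coordinate that reaches zero only relaxes \eqref{eq:lassosg} (a vanishing coefficient permits $\gamma_i \in [-1,1]$). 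Thus $\hbeta + tv$ remains a lasso solution on an interval of $t$ around $0$.

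Next I would push $t$ to the first coordinate crossing. For each $i \in \cA$ with $v_i \neq 0$ the crossing time $\tau_i = -\hbeta_i/v_i$ is finite and nonzero (as $\hbeta_i \neq 0$), and since $v \neq 0$ at least one such $\tau_i$ exists; let $t^\star$ be the crossing time of smallest absolute value and move from $0$ toward it. Moving in that direction no sign flips before $t^\star$, and at $t^\star$ at least one active coordinate becomes zero, so $\hbeta + t^\star v$ is a lasso solution with strictly smaller support. Iterating this reduction, the support strictly decreases at each step, so after finitely many steps I reach a lasso solution whose active columns are linearly independent. Since linearly independent vectors in $\R^n$ number at most $n$, and there are only $p$ columns, this yields $|\cA| \le \min\{n,p\}$.

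I expect the one point requiring care is the verification that the shrunk point $\hbeta + t^\star v$ is genuinely a solution; the clean route is the KKT argument above, where the residual and subgradient stay fixed and only the signs must be tracked. This sidesteps any separate reasoning about the common $\ell_1$ norm, though one could note that Lemma \ref{lem:basic}(iii) then forces $\sum_{i \in \cA} s_i v_i = 0$ as a consistency check. An alternative I could pursue is to use that the solution set is a bounded polytope, as in \eqref{eq:lassopoly}, and simply take any vertex: at a vertex the normals of the active inequalities together with $\row(X_\cE)$ span $\R^{|\cE|}$, which forces the active columns to be independent. I find the iterative reduction more transparent and self-contained, so I would present that version.
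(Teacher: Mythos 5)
Your proof is correct, and it shares the skeleton of the paper's argument (which follows \citeasnoun{boostpath}): start from an arbitrary solution, move along a null-space direction of the active columns until a coefficient hits zero, and iterate until $\rank(X_\cA)=|\cA|$. But the certification step is genuinely different. The paper does not take an arbitrary null direction: it first uses the equicorrelation identity $X_j\T r = s_j\lambda$ to derive the normalization $\sum_{j\in\cA\setminus\{i\}} a_j=1$ in \eqref{eq:aj}, and then builds the specific direction $\theta$ (with $\theta_i=-s_i$, $\theta_j=a_js_j$) so that the $\ell_1$ norm is exactly preserved along the move; solutionhood of $\tbeta$ then follows because both the fit and the $\ell_1$ norm---hence the criterion value---are unchanged. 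You instead take any nonzero $v$ with $\supp(v)\subseteq\cA$ and $Xv=0$ and verify the KKT conditions \eqref{eq:lassokkt}--\eqref{eq:lassosg} directly: the fit, hence the residual and (for $\lambda>0$) the optimal subgradient $\gamma=X\T(y-X\hbeta)/\lambda$, are frozen along $v$, so only sign agreement on the nonzero coordinates needs tracking, and a linear coordinate path can lose agreement only by passing through zero, so the conditions hold up to and including the first crossing time $t^\star$, where the support strictly shrinks. This is sound and somewhat more economical: it dispenses with constructing the normalized direction (your consistency check is in fact automatic, since $\lambda\sum_{i\in\cA} s_iv_i=(Xv)\T r=0$, which is precisely the inner-product computation the paper uses to obtain $\sum_j a_j=1$), and it yields the slightly stronger local fact that the solution set contains a segment through $\hbeta$ in both directions along any active null direction, consistent with the polytope description \eqref{eq:lassopoly}. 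What the paper's route buys is the explicit $\ell_1$-norm bookkeeping, which ties directly into the general-position machinery of Section \ref{sec:unique}; and both routes, as the paper notes in its Remark 3, extend to the general problem \eqref{eq:gen}, since strict convexity of $f$ again freezes the fit and hence the subgradient. Your alternative vertex-of-$K$ argument is also valid---at a vertex the normals of the tight constraints together with $\row(X_\cE)$ span $\R^{|\cE|}$, forcing $\nul(X_\cA)=\{0\}$---though, as you say, the iterative reduction is more self-contained.
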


\begin{proof}
We follow the proof of \citeasnoun{boostpath} closely. Let $\hbeta$ be
a lasso solution, let $\cA=\supp(\hbeta)$ be its active set, and suppose
that $\rank(X_\cA)<|\cA|$. Then by the same arguments as those given 
in Section \ref{sec:unique}, we can write, for some $i\in\cA$,
\begin{equation}
\label{eq:aj}
s_iX_i = \sum_{j \in \cA\setminus\{i\}} a_js_jX_j,
\;\;\;\text{where}\;\;\;
\sum_{j \in \cA\setminus\{i\}} a_j = 1.
\end{equation}
Now define
\begin{equation*}
\theta_i = -s_i \;\;\;\text{and}\;\;\; \theta_j = a_j s_j \;\;\;\text{for}\;\,
j \in \cA\setminus\{i\}.
\end{equation*}
Starting at $\hbeta$, we move in the direction of $\theta$ until a coefficient
hits zero; that is, we define 
\begin{equation*}
\tbeta_{-\cA}=0 \;\;\;\text{and}\;\;\; \tbeta_\cA = \hbeta_\cA + 
\delta \theta,
\end{equation*}
where 
\begin{equation*}
\delta = \min \{ \rho \geq 0 : \hbeta_j + \rho \theta_j = 0 
\;\,\text{for some}\;\, j \in \cA\}.
\end{equation*}
Notice that $\delta$ is guaranteed to be finite, as $\delta \leq |\hbeta_i|$. 
Furthermore, we have $X\tbeta=X\hbeta$ because $\theta\in\nul(X_\cA)$, and also
\begin{align*}
\|\tbeta\|_\lone &= |\tbeta_i| + \sum_{j \in \cA\setminus\{i\}} |\tbeta_j| \\
&= |\hbeta_i| - \delta + \sum_{j \in \cA\setminus\{i\}} (|\hbeta_j|
+ \delta a_j) \\
&= \|\hbeta\|_\lone.
\end{align*}
Hence we have shown that $\tbeta$ achieves the same fit and the same $\ell_1$ 
norm as $\hbeta$, so it is indeed also lasso solution, and it has one fewer nonzero 
coefficient than $\hbeta$. We can now repeat this procedure until we obtain a lasso 
solution whose active set $\cA$ satisfies $\rank(X_\cA)=|\cA|$. 
\end{proof}

\noindent
{\it Remark 1.} This result shows that, for any problem instance, there exists a lasso 
solution supported on $\leq \min\{n,p\}$ variables; some works in the literature have 
misquoted this result by claiming that every lasso solution is supported on 
$\leq \min\{n,p\}$ variables, which is clearly incorrect. When the lasso solution is 
unique, however, Lemma \ref{lem:actsmall} implies that its active set has size 
$\leq \min\{n,p\}$.

\smallskip
\smallskip
\noindent
{\it Remark 2.}
In principle, one could start with any lasso solution, and
follow the proof of Lemma \ref{lem:actsmall} to construct a 
solution whose active set $\cA$ is such that $\rank(X_\cA)=|\cA|$. 
But from a practical perspective, this could be computationally quite difficult,
as computing the constants $a_j$ in \eqref{eq:aj} requires finding a nonzero vector 
in $\nul(X_\cA)$---a nontrivial task that would need to be repeated each time a 
variable is eliminated from the active set. To the best of our 
knowledge, the standard optimization algorithms for the lasso problem (such as
coordinate descent, first-order methods, quadratic programming approaches) 
do not consistently produce lasso solutions with the property that 
$\rank(X_\cA)=|\cA|$ over the active set $\cA$.
This is in contrast to the solution with largest active set, which is computed
by the LARS algorithm.

\smallskip
\smallskip
\noindent
{\it Remark 3.} The proof of Lemma \ref{lem:actsmall} does not actually depend on 
the lasso problem in particular, and the arguments can be extended to cover the general
$\ell_1$ penalized minimization problem \eqref{eq:gen}, with $f$ differentiable and 
strictly convex. (This is in the same spirit as our extension of lasso uniqueness 
results to this general problem in Section \ref{sec:unique}.)
Hence, to put it explicitly, for any differentiable, strictly convex $f$, any $X$, and
$\lambda>0$, there exists a solution of \eqref{eq:gen} whose active set $\cA$ is 
such that $\rank(X_\cA)=|\cA|$. 

\smallskip
\smallskip
\smallskip
The title ``smallest'' active set is justified, because in the next
section we show that the subspace $\col(X_\cA)$ is invariant under all choices of active 
sets $\cA$, for almost every $y\in\R^n$. Therefore, for such $y$, if $\cA$ is an active
set satisfying $\rank(X_\cA)=|\cA|$, then one cannot possibly find a solution whose active 
set has size $<|\cA|$, as this would necessarily change the span of the active variables. 

\subsection{Equivalence of active subspaces}
\label{sec:actequiv}

With the multiplicity of active sets (corresponding to lasso solutions of a given
problem instance), there may be difficulty in identifying and interpreting important
variables, as discussed in the introduction and in Section \ref{sec:coefbounds}. 
Fortunately, it turns out that for almost 
every $y$, the span of the active variables does not depend on the choice of 
lasso solution, as shown in \citeasnoun{lassodf2}. Therefore, even though the linear 
models (given by lasso solutions) may report differences in individual variables, they 
are more or less equivalent in terms of their scope, almost everywhere in $y$.

\begin{lemma}
\label{lem:actequiv}
Fix any $X$ and $\lambda>0$. For almost every $y\in\R^n$, the linear subspace
$\col(X_\cA)$ is exactly the same for any active set $\cA$ coming from a 
lasso solution.
\end{lemma}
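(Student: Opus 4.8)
The plan is to fix $X$ and $\lambda>0$, and show that the set of ``bad'' response vectors $y$ for which $\col(X_\cA)$ fails to be constant across active sets is contained in a finite union of measure-zero sets. The natural exceptional set is the one already introduced in Lemma \ref{lem:actbig}: recall that for $y \notin \cN$, where $\cN$ has measure zero, the LARS lasso solution $\hbeta^\LARS$ is supported on all of $\cE$, so its active set is exactly $\cE$. This is the key leverage point, because it pins down one particular active set in a way that does not vary.

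First I would recall from Section \ref{sec:coefbounds} that every active set $\cA$ coming from a lasso solution satisfies $\cA \subseteq \cE$, so trivially $\col(X_\cA) \subseteq \col(X_\cE)$ for all such $\cA$. The content of the lemma is the reverse containment, and here is where I would exploit Lemma \ref{lem:actbig}: for $y \notin \cN$, the active set $\cE$ itself is realized (by the LARS solution), so it is enough to show $\col(X_\cE) \subseteq \col(X_\cA)$ for every other active set $\cA$. Equivalently, I want to rule out the possibility that dropping from $\cE$ down to some smaller active set $\cA$ shrinks the column span.

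The main step, and I expect the main obstacle, is to argue that no lasso solution can have an active set $\cA$ with $\col(X_\cA) \subsetneq \col(X_\cE)$, at least for almost every $y$. The idea is as follows. The fit $X\hbeta$ is unique across all solutions (Lemma \ref{lem:basic}), and from the characterization \eqref{eq:lassosol} the fit equals $X_\cE (X_\cE)^+\big(y - (X_\cE\T)^+\lambda s\big)$, which lies in $\col(X_\cE)$. If some solution $\hbeta$ had active set $\cA$ with $\col(X_\cA)$ strictly smaller than $\col(X_\cE)$, then since $X\hbeta = X_\cA \hbeta_\cA \in \col(X_\cA)$, the common fit would be forced to lie in this strictly smaller subspace. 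I would then show that this containment imposes a linear constraint on $y$ — essentially requiring $P_{\col(X_\cE)}(y - (X_\cE\T)^+\lambda s)$ to lie in a proper subspace of $\col(X_\cE)$ — and that such $y$ form a finite union (over the finitely many choices of $\cE,s$ and of candidate proper subspaces $\col(X_\cA)$) of affine subspaces of dimension $<n$, hence a measure-zero set. Enlarging $\cN$ to absorb these, I conclude that for $y$ outside the enlarged (still measure-zero) set, every active set $\cA$ has $\col(X_\cA) = \col(X_\cE)$, which gives the claimed invariance.

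The delicate point to get right is the bookkeeping that makes the bad $y$-set measure zero: I must check that the constraint ``$X\hbeta \in \col(X_\cA)$ with $\col(X_\cA)$ proper in $\col(X_\cE)$'' genuinely cuts $y$ down by at least one dimension, rather than being automatically satisfiable. This should follow because the fit is an affine function of $y$ with the identity as its $\col(X_\cE)$-component (projection onto $\col(X_\cE)$ is surjective onto that space), so demanding the fit fall in a strictly smaller subspace is a nontrivial affine condition on $y$. Since there are only finitely many subsets of $\{1,\ldots,p\}$ and sign patterns to range over, the union of all these exceptional affine sets remains measure zero, completing the argument.
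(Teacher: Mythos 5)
Your proposal is correct, but it takes a genuinely different route from the paper. The paper only sketches a proof (deferring details to the cited work of Tibshirani and Taylor): it constructs a measure-zero set $\cN$ carefully enough to establish \emph{local stability} of active sets---for $y \notin \cN$ and any solution with active set $\cA$, there is an open neighborhood $U$ of $y$ such that every $y' \in U$ admits a solution with the same active set $\cA$---and then expresses the fit over $U$ via the projection map onto $\col(X_\cA)$, so that uniqueness of the fit, as a function on an open set, forces the projections (hence the subspaces) to coincide. You instead argue directly from the closed-form fit \eqref{eq:lassofit}: since $(X_\cE\T)^+\lambda s \in \col(X_\cE)$, the fit equals $P_{\col(X_\cE)}\,y - (X_\cE\T)^+\lambda s$, and it must lie in $\col(X_\cA)$ because $X\hbeta = X_\cA \hbeta_\cA$; if $\col(X_\cA) \subsetneq \col(X_\cE)$, then $y$ lies in the preimage under the surjection $P_{\col(X_\cE)}$ of a proper affine subset of $\col(X_\cE)$, an affine subspace of dimension at most $n - \rank(X_\cE) + \rank(X_\cA) \leq n-1$; enumerating the finitely many triples $(\cE, s, \cA)$ and taking the union with the set $\cN$ of \eqref{eq:n} yields a measure-zero exceptional set, off of which every active set spans exactly $\col(X_\cE)$. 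Your bookkeeping worry is resolved exactly as you suggest, and note that Lemma \ref{lem:actbig} is not strictly needed for the invariance claim itself (showing every active set spans $\col(X_\cE)$ already suffices), though invoking it pleasantly confirms that the common subspace is realized by the LARS solution. The trade-off: your argument is shorter, self-contained, and explicitly identifies the invariant subspace as $\col(X_\cE)$, while the paper's route proves the strictly stronger local-stability property of active sets, which is of independent interest and is what the cited work needs for its degrees-of-freedom analysis.
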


Due to the length and technical nature of the proof, we only give a sketch here,
and refer the reader to \citeasnoun{lassodf2} for full details. First, we define
a set $\cN \subseteq \R^n$---somewhat like the set defined in \eqref{eq:n} in 
the proof of Lemma \ref{lem:actbig}---to be a union of affine subspaces of dimension 
$\leq n-1$, and hence $\cN$ has measure zero. Then, for any $y$ except
in this exceptional set $\cN$, we consider any lasso solution at $y$ and examine its
active set $\cA$. Based on the careful construction of $\cN$, we can prove the 
existence of an open set $U$ containing $y$ such
that any $y' \in U$ admits a lasso solution that has an active set $\cA$. In other 
words, this is a result on the local stability of lasso active sets. Next, over $U$, 
the lasso fit can be expressed in terms of the projection map onto $\col(X_\cA)$. The 
uniqueness of the lasso fit finally implies that $\col(X_\cA)$ is the same for any 
choice of active set $\cA$ coming from a lasso solution at $y$. 

\subsection{A necessary condition for uniqueness (almost everywhere)}
\label{sec:uniquenec}

We now give a necessary condition for uniqueness of the lasso solution, that holds 
for almost every $y \in \R^n$ (considering $X$ and $\lambda$ fixed but arbitrary). 
This is in fact the same as the sufficient condition given in Lemma \ref{lem:unique}, 
and hence, for almost every $y$, we have characterized uniqueness 
completely.

\begin{lemma}
\label{lem:unique5}
Fix any $X$ and $\lambda>0$. For almost every $y\in\R^n$, if the lasso solution is
unique, then $\nul(X_\cE)=\{0\}$. 
\end{lemma}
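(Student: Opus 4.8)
The plan is to prove the contrapositive: for almost every $y \in \R^n$, if $\nul(X_\cE) \neq \{0\}$, then the lasso solution is not unique. The entire almost-everywhere qualifier will be inherited from the measure-zero exceptional set $\cN$ built in the proof of Lemma \ref{lem:actbig}, so I would fix a single $y \notin \cN$ and argue for this $y$ throughout. Note that this statement is exactly the converse of the sufficient condition in Lemma \ref{lem:unique}, so combining the two will characterize uniqueness completely for a.e. $y$.

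The key input is Lemma \ref{lem:actbig}: for $y \notin \cN$, the LARS lasso solution $\hbeta^\LARS$ has $\hbeta^\LARS_i \neq 0$ for every $i \in \cE$. Since $\hbeta^\LARS$ is itself a lasso solution, its nonzero coefficients must carry the equicorrelation signs, so in fact $s_i \hbeta^\LARS_i = |\hbeta^\LARS_i| > 0$ \emph{strictly} for all $i \in \cE$. This strict feasibility is the crucial point: the LARS solution sits in the relative interior of the sign constraints in \eqref{eq:lassosign}, not on their boundary, which is precisely what lets an arbitrarily small perturbation remain feasible.

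With this in hand I would invoke the characterization \eqref{eq:lassosol}--\eqref{eq:lassosign} of the full solution set. Assuming $\nul(X_\cE) \neq \{0\}$, pick any nonzero $b_0 \in \nul(X_\cE)$ and consider the perturbed candidate $\hbeta_{-\cE} = 0$, $\hbeta_\cE = \hbeta^\LARS_\cE + \epsilon b_0$. Because $s_i \hbeta^\LARS_i > 0$ strictly for each of the finitely many indices $i \in \cE$, there is $\epsilon > 0$ small enough that $s_i(\hbeta^\LARS_i + \epsilon (b_0)_i) > 0$ for all $i \in \cE$; hence the sign condition in \eqref{eq:lassosign} holds and the candidate is a genuine lasso solution by \eqref{eq:lassosol}. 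Since $b_0 \neq 0$ and $\epsilon > 0$, it differs from $\hbeta^\LARS$ (which is the $b = 0$ solution, as in \eqref{eq:larssol}), so the solution is not unique. Taking the contrapositive gives the claim.

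I do not expect a serious obstacle: the argument collapses to a one-line perturbation once strict sign feasibility is available, and that is exactly what Lemma \ref{lem:actbig} supplies. The only points requiring care are that the measure-zero set is borrowed from Lemma \ref{lem:actbig} (so no new exceptional set need be constructed) and that the feasibility of $\hbeta^\LARS$ is strict rather than merely nonnegative---it is this strictness that guarantees a small perturbation along $\nul(X_\cE)$ stays feasible. Together with Lemma \ref{lem:unique}, the result shows that $\nul(X_\cE) = \{0\}$ is both necessary and sufficient for uniqueness, for almost every $y$.
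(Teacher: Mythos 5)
Your proposal is correct and is essentially the paper's own proof: the paper also takes $y\notin\cN$ from Lemma \ref{lem:actbig}, uses that $s_i\cdot\hbeta^\LARS_i>0$ strictly for all $i\in\cE$, and perturbs by $\delta b$ with $0\not=b\in\nul(X_\cE)$ and $\delta>0$ small to produce a second solution. The only cosmetic difference is that the paper phrases the argument as a contradiction with an assumed unique solution (which must then be the LARS solution), whereas you state the same perturbation as a direct contrapositive.
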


\begin{proof}
Let $\cN$ be as defined in \eqref{eq:n}. Then for $y \notin \cN$, the LARS lasso 
solution $\hbeta^\LARS$ has active set equal to $\cE$. 
If the lasso solution is unique, then it must be the LARS lasso solution. 
Now suppose that $\nul(X_\cE)\not=\{0\}$,
and take any $b \in \nul(X_\cE)$, $b\not=0$. 
As the LARS lasso solution is supported on all
of $\cE$, we know that
\begin{equation*}
s_i \cdot \hbeta^\LARS_i > 0 \;\;\;\text{for all}\;\, i \in \cE.
\end{equation*}
For $\delta>0$, define
\begin{equation*}
\hbeta_{-\cE}=0 \;\;\;\text{and}\;\;\;
\hbeta_\cE = \hbeta^\LARS_\cE + \delta b.
\end{equation*}
Then we know that 
\begin{equation*}
\delta b \in \nul(X_\cE) \;\;\;\text{and}\;\;\;
s_i \cdot (\hbeta^\LARS_i + \delta b_i) > 0, 
\;\, i\in \cE,
\end{equation*}
the above inequality holding for small enough $\delta>0$, by
continuity. Therefore $\hbeta \not= \hbeta^\LARS$ is also a solution, 
contradicting uniqueness, which means that $\nul(X_\cE)=\{0\}$.
\end{proof}

\section{Discussion}
\label{sec:discuss}

We studied the lasso problem, covering conditions for uniqueness,
as well as results aimed at better understanding the behavior of lasso
solutions in the non-unique case. Some of the results presented in this 
paper were already known in the literature, and others were novel. We 
give a summary here. 

Section \ref{sec:unique} showed that any one of the
following three conditions is sufficient for uniqueness of the lasso 
solution: (i) $\nul(X_\cE)=\{0\}$, where $\cE$ is the unique equicorrelation
set; (ii) $X$ has columns in general position; (iii) $X$ has entries drawn 
from a continuous probability
distribution (the implication now being uniqueness with probability one). 
These results can all be found in the literature, in one form or another.
They also apply to a more general $\ell_1$ penalized minimization problem,
provided that the loss function is differentiable and strictly convex when
considered a function of $X\beta$ (this covers, for example, $\ell_1$ 
penalized logistic regression and $\ell_1$ penalized Poisson regression). 
Section \ref{sec:relprops} showed that for the lasso problem, the 
condition $\nul(X_\cE)=\{0\}$ is also necessary for uniqueness of the solution, 
almost everywhere in $y$. To the best of our knowledge, this is a new result.

Sections \ref{sec:lars} and \ref{sec:coefbounds} contained novel work on 
extending the LARS path algorithm to the non-unique case, and on bounding the 
coefficients of lasso solutions in the non-unique case, respectively. 
The newly proposed LARS algorithm works for any predictor matrix $X$, whereas
the original LARS algorithm only works when the lasso solution path is unique. 
Although our extension may superficially appear to be quite minor, its proof of 
correctness is somewhat more involved. In Section \ref{sec:lars} we also 
discussed some interesting properties of LARS lasso solutions in the non-unique 
case.
Section \ref{sec:coefbounds} derived a simple method for computing marginal 
lower and upper bounds for the coefficients of lasso solutions of any given
problem instance. It is also in this section that we showed that no two lasso
solutions can exhibit different signs for a common active variable, implying
that the bounding intervals cannot contain zero in their interiors. 
These intervals allowed
us to categorize each equicorrelation variable as either ``dispensable''---meaning
that some lasso solution excludes this variable from active set, or
``indispensable''---meaning that every lasso solution includes this variable
in its active set. We hope that this represents progress towards interpretation
in the non-unique case. 

Finally, the remainder of Section \ref{sec:relprops} reviewed existing results
from the literature on the active sets of lasso solutions in the non-unique 
case. The first was the fact that the LARS lasso solution is fully supported 
on $\cE$, and hence attains the largest active set, almost everywhere in $y$. 
Next, there always exists a lasso solution whose active set $\cA$ satisfies
$\rank(X_\cA) = |\cA|$, and therefore has size $|\cA| \leq \min\{n,p\}$. 
The last result gave an equivalence between all active sets of lasso
solutions of a given problem instance: for almost every $y$, the subspace 
$\col(X_\cA)$ is the same for any active set $\cA$ of a lasso solution.

\section*{Acknowledgements}

The idea for computing bounds on the coefficients of lasso solutions 
was inspired by a similar idea of Max Jacob Grazier G'Sell, for bounding
the uncertainty in maximum likelihood estimates. We would like to thank
Jacob Bien, Trevor Hastie, and Robert Tibshirani for helpful comments.

\appendix 
\section{Appendix}
\subsection{Proof of correctness of the LARS algorithm}
\label{app:larsproof}
We prove that for a general $X$, the LARS algorithm (Algorithm
\ref{alg:lars}) computes a lasso solution path, by
induction on $k$, the iteration counter. The key result is 
Lemma \ref{lem:larsid}, which shows that the LARS lasso solution
is continuous at each knot $\lambda_k$ in the path, as we change
the equicorrelation set and signs from one iteration to the next.
We delay the presentation and proof of Lemma \ref{lem:larsid} 
until we discuss the proof of correctness, for the sake of clarity.

The base case $k=0$ is straightforward, hence assume that the 
computed path is a solution path 
through iteration $k-1$, that is, for all $\lambda \geq \lambda_k$. 
Consider the $k$th iteration, and let $\cE$ and $s$ denote the current 
equicorrelation set and signs.
First we note that the LARS lasso solution, as defined in terms
of the current $\cE,s$, satisfies the KKT conditions at $\lambda_k$. 
This is implied by Lemma \ref{lem:larsid}, and the fact that the KKT 
conditions were satisfied at $\lambda_k$ with the old equicorrelation 
set and signs. To be more explicit, Lemma \ref{lem:larsid} and the 
inductive hypothesis together imply that
\begin{equation*}
\big\|X_{-\cE}\T \big(y- X\hbeta^\LARS(\lambda_k)\big)\|_\linf < \lambda_k,
\;\;\;
X_\cE\T \big(y- X\hbeta^\LARS(\lambda_k)\big) = \lambda_k s,
\end{equation*}
and $s=\sign(\hbeta^\LARS_\cE(\lambda_k))$, which verifies the KKT conditions
at $\lambda_k$. Now note that for any $\lambda \leq \lambda_k$ 
(recalling the definition of $\hbeta^\LARS(\lambda)$), we have
\begin{align*}
X_\cE\T \big(y- X\hbeta^\LARS(\lambda)\big) &= 
X_\cE\T y - X_\cE\T X_\cE (X_\cE)^+ y + X_\cE\T (X_\cE\T)^+ \lambda s \\
&= X_\cE\T (X_\cE\T)^+ \lambda s \\
&= \lambda s,
\end{align*}
where the last equality holds as $s \in \row(X_\cE)$. Therefore,
as $\lambda$ decreases, only one of the following two conditions can
break: $\|X_{-\cE}\T (y-X\hbeta^\LARS(\lambda)\|_\linf
< \lambda$, or  $s = \sign(\hbeta^\LARS_\cE(\lambda))$. 
The first breaks at the next joining time $\lambda^\mathrm{join}_{k+1}$,
and the second breaks at the next crossing time
$\lambda^\mathrm{cross}_{k+1}$.
Since we only decrease $\lambda$ to
$\lambda_{k+1}=\max\{\lambda^\mathrm{join}_{k+1},
\lambda^\mathrm{cross}_{k+1}\}$, we have hence verified
the KKT conditions for $\lambda \geq \lambda_{k+1}$, completing the
proof.

Now we present Lemma \ref{lem:larsid}, which shows that 
$\hbeta^\LARS(\lambda)$ is continuous 
(considered as a function of $\lambda$) at every knot $\lambda_k$. This 
means that the constructed solution path is also globally continuous, 
as it is simply a linear function between knots. We note that 
\citeasnoun{genlasso-supp} proved a parallel lemma (of the same name) 
for their dual path algorithm for the generalized lasso.

\begin{lemma}[\textbf{The insertion-deletion lemma}]
\label{lem:larsid}
At the $k$th iteration of the LARS algorithm, let $\cE$ and $s$ denote
the equicorrelation set and signs, and let $\cE^*$ and $s^*$ 
denote the same quantities at
the beginning of the next iteration. The two possibilities are:
\begin{enumerate}
\item (Insertion) If a variable joins the equicorrelation set at
  $\lambda_{k+1}$, that is, $\cE^*$ and $s^*$ are formed by 
  adding elements to $\cE$ and $s$, then:
\begin{equation}
\label{eq:ecadd}
\left[\begin{array}{c}
(X_\cE)^+ \big(y - (X_\cE\T)^+ \lambda_{k+1} s\big) 
\smallskip \\
0
\end{array}\right] =
\left[\begin{array}{c}
\Big[(X_{\cE^*})^+ \big(y - 
(X_{\cE^*}\T)^+\lambda_{k+1} s^* \big)
\Big]_{-i^{\mathrm{join}}_{k+1}} 
\smallskip \\
\Big[(X_{\cE^*})^+ \big(y - 
(X_{\cE^*}\T)^+\lambda_{k+1} s^* \big)
\Big]_{i^{\mathrm{join}}_{k+1}} 
\end{array}\right].
\end{equation}
\item (Deletion) If a variable leaves the equicorrelation set at
  $\lambda_{k+1}$, that is,  $\cE^*$ and $s^*$ are formed by
  deleting elements from $\cE$ and $s$, then:
\begin{equation}
\label{eq:ecdel}
\left[\begin{array}{c}
\Big[(X_\cE)^+ \big(y - (X_\cE\T)^+ \lambda_{k+1} s \big)
\Big]_{-i^{\mathrm{cross}}_{k+1}} 
\smallskip \\
\Big[(X_\cE)^+ \big(y - (X_\cE\T)^+ \lambda_{k+1} s \big)
\Big]_{i^{\mathrm{cross}}_{k+1}} 
\end{array}\right] =
\left[\begin{array}{c}
(X_{\cE^*})^+ \big( y - (X_{\cE^*}\T)^+ \lambda_{k+1} s\big)
\smallskip \\ 
0
\end{array}\right].
\end{equation}
\end{enumerate}
\end{lemma}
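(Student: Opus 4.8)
The plan is to read Lemma~\ref{lem:larsid} as the assertion that the LARS lasso solution is \emph{continuous} across the knot $\lambda_{k+1}$: the coefficient vector produced by \eqref{eq:lassols} from the old pair $\cE,s$ and evaluated at $\lambda_{k+1}$ must agree, after padding and reindexing, with the vector produced by \eqref{eq:lassols} from the new pair $\cE^*,s^*$, also at $\lambda_{k+1}$. The engine for both cases is the variational characterization behind \eqref{eq:lassols2}: for any index set $\cF$ and signs $\sigma\in\row(X_\cF)$, the vector $(X_\cF)^+(y-(X_\cF\T)^+\lambda\sigma)$ is the \emph{unique} $\beta$ satisfying both (a) the stationarity equation $X_\cF\T(y-X_\cF\beta)=\lambda\sigma$ and (b) the minimum-norm condition $\beta\in\row(X_\cF)$. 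First I would record this reformulation, which follows from the normal-equation rearrangement already carried out around \eqref{eq:lassokkte}, and then verify (a) and (b) separately in each case.

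For the deletion case \eqref{eq:ecdel}, write $j=i^\mathrm{cross}_{k+1}$ and $\mu=\hbeta^\LARS_\cE(\lambda_{k+1})$; by the definition of the crossing time \eqref{eq:lassocross} we have $\mu_j=0$, and the candidate for $\hbeta^\LARS_{\cE^*}(\lambda_{k+1})$ is the truncation $\mu_{-j}$. Since $\mu_j=0$ gives $X_{\cE^*}\mu_{-j}=X_\cE\mu$, property (a) for $\cE^*$ is exactly the $\cE^*$-block of property (a) for $\cE$, which holds by the inductive hypothesis. Property (b) is immediate: if $v$ realizes $X_\cE\T v=\mu$, then restricting to the columns in $\cE^*$ gives $X_{\cE^*}\T v=\mu_{-j}$, so $\mu_{-j}\in\row(X_{\cE^*})$. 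Uniqueness then forces $\mu_{-j}=\hbeta^\LARS_{\cE^*}(\lambda_{k+1})$, which is \eqref{eq:ecdel}.

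For the insertion case \eqref{eq:ecadd}, write $j=i^\mathrm{join}_{k+1}$ and let $\tbeta$ be $\hbeta^\LARS_\cE(\lambda_{k+1})$ padded with a zero in coordinate $j$; the goal is $\tbeta=\hbeta^\LARS_{\cE^*}(\lambda_{k+1})$. Because the padded coordinate is zero, $X_{\cE^*}\tbeta=X_\cE\hbeta^\LARS_\cE(\lambda_{k+1})$, so the residual is unchanged. The $\cE$-block of property (a) holds by induction, while the new $j$-block reads $X_j\T(y-X_\cE\hbeta^\LARS_\cE(\lambda_{k+1}))=\lambda_{k+1}s^*_j$, which is precisely the joining condition defining $\lambda_{k+1}=\lambda^\mathrm{join}_{k+1}$ and $s^\mathrm{join}_{k+1}$; hence (a) holds for $\cE^*$.

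The hard part is property (b): one must show $\tbeta\in\row(X_{\cE^*})$, i.e.\ that appending a zero keeps the vector in the row space after $X_j$ is adjoined. This is exactly where $\rank(X_\cE)$ being possibly deficient bites, and it is the crux distinguishing the general-$X$ argument from the classical full-rank one. I would argue as follows: pick $u\in\col(X_\cE)$ with $X_\cE\T u=\hbeta^\LARS_\cE(\lambda_{k+1})$, and seek $v=u+w$ with $w\in\col(X_\cE)^\perp=\nul(X_\cE\T)$ so that additionally $X_j\T v=0$. Such a $w$ exists precisely when $X_j\notin\col(X_\cE)$, since then $P_{\col(X_\cE)^\perp}X_j\neq 0$ and $X_j\T w$ can be driven to $-X_j\T u$. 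To close the gap I would verify that a genuine joining event at $\lambda_{k+1}>0$ forces $X_j\notin\col(X_\cE)$: substituting $X_j=X_\cE\alpha$ shows the numerator $X_j\T(X_\cE(X_\cE)^+-I)y$ of \eqref{eq:lassojoin} vanishes, because $(X_\cE(X_\cE)^+-I)y\in\col(X_\cE)^\perp$ is orthogonal to $X_j\in\col(X_\cE)$, so $t^\mathrm{join}_j=0$ and $j$ could not have joined at positive $\lambda$. Thus at the knot $X_j\notin\col(X_\cE)$, property (b) holds, and uniqueness gives $\tbeta=\hbeta^\LARS_{\cE^*}(\lambda_{k+1})$, which is \eqref{eq:ecadd}. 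I expect the verification that $X_j\notin\col(X_\cE)$, together with careful handling of the degenerate $0/0$ instance of \eqref{eq:lassojoin} (corresponding to $j$ already lying on the equicorrelation boundary), to be the most delicate bookkeeping in the whole argument.
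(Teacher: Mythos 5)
Your proof is correct, and it takes a genuinely different route from the paper's. The paper works directly with the bordered normal equations: it writes the relevant vector as the minimum $\ell_2$ norm solution of $X_{\cE}\T X_{\cE} x = X_\cE\T y - \lambda_{k+1}s$ (or the analogous system for $\cE^*$), eliminates blockwise, uses minimality of the norm to kill the free null-space component $b\in\nul(X_{\cE^*})$ (resp.\ $\nul(X_\cE)$), and in the insertion case solves explicitly for the new coordinate, $x_2 = [X_j\T(I-P)X_j]^{-1}\big(X_j\T[(I-P)y + (X_\cE\T)^+\lambda_{k+1}s] - \lambda_{k+1}s^{\mathrm{join}}_{k+1}\big)$ with $P=P_{\col(X_\cE)}$, which vanishes by the definition of the joining time. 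You instead isolate an abstract characterization---$(X_\cF)^+(y-(X_\cF\T)^+\lambda\sigma)$ is the unique vector satisfying stationarity $X_\cF\T(y-X_\cF\beta)=\lambda\sigma$ together with $\beta\in\row(X_\cF)$---and verify both properties for the truncated (deletion) or zero-padded (insertion) candidate. Your deletion argument is cleaner than the paper's: the row-space membership of $\mu_{-j}$ is immediate from restricting $\mu=X_\cE\T v$ to the columns in $\cE^*$, with no elimination needed. What your route costs you is the insertion case's property (b), which you correctly identify as the crux and reduce to showing $X_j\notin\col(X_\cE)$ at a genuine joining event; your argument for this (the numerator of \eqref{eq:lassojoin} vanishes when $X_j\in\col(X_\cE)$, forcing $t^{\mathrm{join}}_j=0$) is sound, and notably this same non-degeneracy is needed \emph{implicitly} by the paper, whose formula for $x_2$ inverts $X_j\T(I-P)X_j=\|(I-P)X_j\|_\ltwo^2$ without comment. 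The residual degenerate scenario you flag---$X_j\in\col(X_\cE)$ with $X_j\T(X_\cE\T)^+s=\pm 1$, e.g.\ duplicate columns, where variable $j$ rides the equicorrelation boundary identically in $\lambda$---is glossed over by the paper as well (its claim that only one sign in \eqref{eq:lassojoin} gives a valid joining time carries the same caveat), so this is a shared, not a new, gap; you are in fact more explicit about it. Two small bookkeeping points: you should spell out the uniqueness half of your characterization (two solutions of stationarity differ by an element of $\nul(X_\cE\T X_\cE)=\nul(X_\cE)$, and $\row(X_\cE)\cap\nul(X_\cE)=\{0\}$), and note that once property (a) is verified for the candidate, the requirement $s^*\in\row(X_{\cE^*})$ needed for the formula vector to satisfy (a) follows automatically, since $\lambda_{k+1}s^*=X_{\cE^*}\T r$ is itself a combination of rows (you also, correctly, read the $s$ on the right-hand side of \eqref{eq:ecdel} as $s^*$).
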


\begin{proof}
We prove each case separately. The deletion case is actually easier so
we start with this first.

\smallskip
\smallskip
\noindent
{\it Case 2: Deletion.} Let 
\begin{equation*}
\left[\begin{array}{c}
x_1 \\ x_2 
\end{array}\right] =
\left[\begin{array}{c}
\Big[(X_\cE)^+ \big(y - (X_\cE\T)^+ \lambda_{k+1} s \big)
\Big]_{-i^{\mathrm{cross}}_{k+1}} 
\smallskip \\
\Big[(X_\cE)^+ \big(y - (X_\cE\T)^+ \lambda_{k+1} s \big)
\Big]_{i^{\mathrm{cross}}_{k+1}} 
\end{array}\right],
\end{equation*}
the left-hand side of \eqref{eq:ecdel}. By definition, we have $x_2 =
0$ because variable $i^{\mathrm{cross}}_{k+1}$ crosses through zero
at $\lambda_{k+1}$. Now we consider $x_1$. 
Assume without a loss of generality that
$i^{\mathrm{cross}}_{k+1}$ is the last of the equicorrelation
variables, so that we can write
\begin{equation*}
\left[\begin{array}{c}
x_1 \\ x_2 
\end{array}\right] =
(X_\cE)^+ \big(y - (X_\cE\T)^+ \lambda_{k+1} s\big).
\end{equation*}
The point $(x_1,x_2)\T$ is the minimum $\ell_2$ norm solution of the
linear equation:
\begin{equation*}
X_\cE\T X_\cE
\left[\begin{array}{c}
x_1 \\ x_2 
\end{array}\right] =
X_\cE\T y - \lambda_{k+1}s.
\end{equation*}
Decomposing this into blocks, 
\begin{equation*}
\left[\begin{array}{cc}
X_{\cE^*}\T X_{\cE^*} & 
X_{\cE^*}\T X_{i^{\mathrm{cross}}_{k+1}} 
\smallskip \\
X_{i^{\mathrm{cross}}_{k+1}}\T X_{\cE^*} &
X_{i^{\mathrm{cross}}_{k+1}}\T
X_{i^{\mathrm{cross}}_{k+1}}
\end{array}\right]
\left[\begin{array}{c}
x_1 \\ x_2 
\end{array}\right] =
\left[\begin{array}{c}
X_{\cE^*}\T 
\smallskip \\
X_{i^{\mathrm{cross}}_{k+1}}\T
\end{array}\right]
y - \lambda_{k+1}
\left[\begin{array}{c}
s^* \\
s^{\mathrm{cross}}_{k+1}
\end{array}\right].
\end{equation*}
Solving this for $x_1$ gives
\begin{align*}
x_1 &= (X_{\cE^*}\T X_{\cE^*})^+ 
\Big[X_{\cE^*}\T y - \lambda_{k+1} s^* - X_{\cE^*}\T
X_{i^{\mathrm{cross}}_{k+1}} x_2\Big] + b \\
&= (X_{\cE^*})^+ \big(y - (X_{\cE^*}\T)^+ 
\lambda_{k+1} s^*\big) + b,
\end{align*}
where $b \in \nul(X_{\cE^*})$. Recalling that $x_1$ must have minimal 
$\ell_2$ norm, we compute 
\begin{equation*}
\|x_1\|_\ltwo^2 = \Big\|(X_{\cE^*})^+ 
\big( y - (X_{\cE^*}\T)^+ \lambda_{k+1} s^*\big) \Big\|_\ltwo^2
+ \|b\|_\ltwo^2,
\end{equation*}
which is smallest when $b=0$. This completes the proof.

\smallskip
\smallskip
\noindent
{\it Case 1: Insertion.} This proof is similar,
but only a little more complicated. Now we let
\begin{equation*}
\left[\begin{array}{c}
x_1 \\ x_2 
\end{array}\right] =
\left[\begin{array}{c}
\Big[(X_{\cE^*})^+ \big( y - 
(X_{\cE^*}\T)^+\lambda_{k+1} s^*\big)
\Big]_{-i^{\mathrm{join}}_{k+1}} 
\smallskip \\
\Big[(X_{\cE^*})^+ \big( y - 
(X_{\cE^*}\T)^+\lambda_{k+1} s^*\big)
\Big]_{i^{\mathrm{join}}_{k+1}} 
\end{array}\right],
\end{equation*}
the right-hand side of \eqref{eq:ecadd}. Assuming without a loss of
generality that $i^{\mathrm{join}}_{k+1}$ is the largest of the
equicorrelation variables, the point $(x_1,x_2)\T$ is the minimum
$\ell_2$ norm solution to the linear equation:
\begin{equation*}
X_{\cE^*}\T X_{\cE^*}
\left[\begin{array}{c}
x_1 \\ x_2 
\end{array}\right] = 
X_{\cE^*}\T y - \lambda_{k+1} s^*.
\end{equation*}
If we now decompose this into blocks, we get
\begin{equation*}
\left[\begin{array}{cc}
X_\cE\T X_\cE & X_\cE\T X_{i^{\mathrm{join}}_{k+1}} 
\smallskip \\
X_{i^{\mathrm{join}}_{k+1}}\T X_\cE &
X_{i^{\mathrm{join}}_{k+1}}\T
X_{i^{\mathrm{join}}_{k+1}}
\end{array}\right]
\left[\begin{array}{c}
x_1 \\ x_2 
\end{array}\right] = 
\left[\begin{array}{c}
X_\cE\T 
\smallskip \\
X_{i^{\mathrm{join}}_{k+1}}\T
\end{array}\right]
y - \lambda_{k+1}
\left[\begin{array}{c}
s \\
s^{\mathrm{join}}_{k+1}
\end{array}\right].
\end{equation*}
Solving this system for $x_1$ in terms of $x_2$ gives
\begin{align*}
x_1 &= (X_\cE\T X_\cE)^+
\Big[ X_\cE\T y - \lambda_{k+1} s - 
X_\cE\T X_{i^{\mathrm{join}}_{k+1}} x_2 \Big] + b \\
&= (X_\cE)^+ \Big[y - (X_\cE\T)^+ \lambda_{k+1} s - 
X_\cE\T X_{i^{\mathrm{join}}_{k+1}} x_2 \Big] + b,
\end{align*}
where $b \in \nul(X_\cE)$, and as we argued in the deletion case, we
know that $b=0$ in order for $x_1$ to have minimal
$\ell_2$ norm. Therefore we only need to show that $x_2=0$. To do
this, we solve for $x_2$ in the above block system, plug in what
we know about $x_1$, and after a bit of calculation we get 
\begin{equation*}
x_2 = \Big[X_{i^{\mathrm{join}}_{k+1}}\T (I-P) 
X_{i^{\mathrm{join}}_{k+1}}\Big]^{-1}
\Big(X_{i^{\mathrm{join}}_{k+1}}\T 
\Big[(I-P)y + (X_\cE\T)^+ \lambda_{k+1} s  
\Big] - \lambda s^{\mathrm{join}}_{k+1} \Big),
\end{equation*}
where we have abbreviated $P=P_{\col(X_\cE)}$. But the expression
inside the parentheses above is exactly
\begin{equation*}
X_{i^{\mathrm{join}}_{k+1}}\T 
\big(y - X \hbeta^\LARS(\lambda_{k+1})\big) - 
\lambda s^{\mathrm{join}}_{k+1}
= 0,
\end{equation*}
by definition of the joining time. Hence
we conclude that $x_2=0$, as desired, and
this completes the proof. 
\end{proof}

\subsection{Local LARS algorithm for the lasso path}
\label{app:loclars}

We argue that there is nothing special about starting the LARS path algorithm at 
$\lambda=\infty$. Given any solution the lasso problem at $y,X$, and 
$\lambda^*>0$, we can define the unique equicorrelation set $\cE$ 
and signs $s$, as in \eqref{eq:equiset} and \eqref{eq:equisigns}. The LARS lasso 
solution at $\lambda^*$ can then be explicitly constructed as in \eqref{eq:larssol},
and by following the same steps as those outlined in Section \ref{sec:larsalg}, 
we can compute the LARS lasso solution path beginning at $\lambda^*$, for decreasing
values of the tuning parameter; that is, over $\lambda \in [0,\lambda^*]$.

In fact, the LARS lasso path can also be computed in the reverse direction, 
for increasing values of the tuning parameter. Beginning with the LARS lasso
solution at $\lambda^*$, it is not hard to see that in this direction (increasing
$\lambda$) a variable enters the equicorrelation set at the next crossing time---the
minimal crossing time larger than $\lambda^*$, and a variable leaves the equicorrelation 
set at the next joining time---the minimal joining time larger than $\lambda^*$. This is
of course the opposite of the behavior of joining and crossing times in the usual 
direction (decreasing $\lambda$). Hence, in this manner, we can compute the LARS
lasso path over $\lambda \in [\lambda^*,\infty]$.

This could be useful in studying a large lasso problem: if we knew a tuning parameter 
value $\lambda^*$ of interest (even approximate interest), then we could compute a 
lasso solution at $\lambda^*$ using one of the many efficient techniques from convex 
optimization (such as coordinate descent, or accelerated first-order methods), and 
subsequently compute a local solution path around $\lambda^*$ to investigate the behavior
of nearby lasso solutions. This can be achieved by finding the knots to the left and right 
of $\lambda^*$ (performing one LARS iteration in the usual direction and one iteration in 
the reverse direction), and repeating this, until a desired range $\lambda \in 
[\lambda^*-\delta_L,\lambda^*+\delta_R]$ is achieved.

\subsection{Enumerating all active sets of lasso solutions}
\label{app:allacts}

We show that the facial structure of the polytope $K$ in \eqref{eq:lassopoly}
describes the collection of active sets of lasso solutions, almost everywhere
in $y$.

\begin{lemma}
Fix any $X$ and $\lambda>0$. For almost every $y\in\R^n$, 
there is a one-to-one
correspondence between active sets of lasso solutions
and nonempty faces of the polyhedron $K$ defined in 
\eqref{eq:lassopoly}.
\end{lemma}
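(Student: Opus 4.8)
The plan is to construct an explicit bijection between the nonempty faces of the polytope $K$ and the active sets of lasso solutions, exploiting the fact that ``$x_i = 0$'' is simultaneously the tightness condition for the $i$th inequality defining $K$ and the exclusion condition for the $i$th coordinate from the support. Recall from \eqref{eq:lassopoly} that every lasso solution has the form $\hbeta_{-\cE}=0$, $\hbeta_\cE = x$ with $x \in K$, and its active set is $\cA = \supp(\hbeta) = \{i \in \cE : x_i \neq 0\}$; hence the collection of active sets is exactly $\{\supp(x) : x \in K\}$. The only inequality constraints defining $K$ are $s_i x_i \geq 0$ for $i \in \cE$, and since each $s_i \in \{-1,1\}$, the $i$th such constraint is tight at $x$ precisely when $x_i = 0$. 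Thus the set of tight inequalities at $x$ is $\cE \setminus \supp(x)$, which already signals that supports and faces are two sides of the same coin.

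First I would set up the map $\Phi$ from faces to active sets. Every point of $K$ lies in the relative interior of a unique face (its minimal enclosing face), and on $\mathrm{relint}(F)$ the set of tight inequalities is constant, equal to the maximal tight set $T(F) = \{i \in \cE : x_i = 0 \text{ for all } x \in F\}$. I would invoke the standard polyhedral fact that a nonempty face is determined by its maximal tight set, so that $F \mapsto T(F)$ is injective. Defining $\Phi(F) = \cE \setminus T(F)$, the previous paragraph gives $\Phi(F) = \supp(x)$ for any $x \in \mathrm{relint}(F)$, so $\Phi(F)$ is a genuine active set and $\Phi$ is well defined.

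Next I would exhibit the inverse and conclude bijectivity. Given an active set $\cA$, choose a witnessing solution with $x^* \in K$, $\supp(x^*)=\cA$, and set $F_\cA = \{x \in K : x_i = 0 \text{ for all } i \in \cE \setminus \cA\}$, which is a face of $K$, being $K$ intersected with a subset of its bounding hyperplanes. Since $x^* \in F_\cA$ with $x^*_j \neq 0$ for every $j \in \cA$, no coordinate indexed by $\cA$ vanishes identically on $F_\cA$, so $T(F_\cA) = \cE \setminus \cA$ exactly, giving $\Phi(F_\cA) = \cA$; hence $\Phi$ is surjective. Injectivity is immediate: if $\Phi(F_1) = \Phi(F_2) = \cA$ then $T(F_1) = T(F_2) = \cE\setminus\cA$, and since faces are pinned down by their maximal tight sets, $F_1 = F_2$. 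Thus $\Phi$ is a bijection with inverse $\cA \mapsto F_\cA$.

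Finally, the almost-everywhere hypothesis enters to anchor the top of this correspondence: for $y$ outside the measure-zero set $\cN$ of Lemma \ref{lem:actbig}, the LARS solution is supported on all of $\cE$, so $\hbeta^\LARS_\cE$ makes every inequality $s_i x_i \geq 0$ strict and therefore lies in $\mathrm{relint}(K)$, whence $\Phi(K) = \cE$ and the improper face $K$ corresponds to the largest active set $\cE$, consistent with the earlier results. I expect the main obstacle to be purely the polyhedral-combinatorics bookkeeping---carefully justifying that relative interiors of faces carry constant support and that nonempty faces biject with their maximal tight inequality sets---rather than anything analytic; once that is in place, both directions of the correspondence fall out from the single identification of coordinate vanishing with constraint tightness.
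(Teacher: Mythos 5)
Your proof is correct and takes essentially the same route as the paper: both directions rest on identifying the vanishing of a coordinate $x_i$ with tightness of the constraint $s_i x_i \geq 0$, and your inverse map $\cA \mapsto F_\cA = \{x \in K : x_{\cE\setminus\cA} = 0\}$ is precisely the face the paper exhibits via the supporting hyperplane $\sum_{i \in \cE\setminus\cA} s_i x_i = 0$. If anything, your bookkeeping is slightly more careful than the paper's: by working with maximal tight sets and relative-interior points you avoid the paper's decomposition of faces into ``facets'' $K \cap \{x : x_i = 0\}$ (which its own footnote concedes is an abuse requiring reparametrization), you verify explicitly that each face's zero pattern is genuinely attained as the support of some solution, and as a byproduct your bijection actually holds for every $y$, the almost-everywhere hypothesis entering only to identify the full face $K$ with the active set $\cE$ itself.
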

\begin{proof}
Nonnempty faces of $K$ are sets $F$ of the form 
$F = K \cap H \not= \emptyset$, where $H$ is a supporting hyperplane 
to $K$. If $\cA$ is an active set of a lasso solution, then 
there exists an $x \in K$ such that $x_{\cE\setminus\cA}=0$.
Hence, recalling the sign condition in \eqref{eq:lassosign2}, the 
hyperplane $H_{\cE\setminus\cA} = \{x \in \R^{|\cE|} : u^T x = 0\}$, 
where 
$$u_i = \begin{cases}
s_i & \text{if} \; i \in \cE\setminus\cA \\
0 & \text{if} \; i \in \cA,
\end{cases}$$
supports $K$. Furthermore, we have $F = K \cap H = 
\{x \in K : \sum_{i \in \cE\setminus\cA} s_ix_i = 0\}
= \{x \in K : x_{\cE\setminus\cA} = 0\}$.
Therefore every active set $\cA$ corresponds to a nonempty face $F$ 
of $K$. 

Now we show the converse statement holds, for almost every $y$. 
Well, the facets of $K$ are sets of the form 
$F_i = K \cap \{x \in \R^{|\cE|} : x_i = 0\}$ for some $i \in \cE$.\footnote{This
is slightly abusing the notion of a facet, but the argument here can be made rigorous 
by reparametrizing the coordinates in terms of the affine subspace $\{x \in \R^{|\cE|} : 
Px = \hbeta^\LARS_\cE\}$.} Each nonempty proper
face $F$ can be written as an intersection of facets: $F = \cap_{i \in \mathcal{I}} F_i = 
\{x \in K : x_{\mathcal{I}} = 0\}$, and hence $F$ corresponds to the active set 
$\cA = \cE\setminus\mathcal{I}$. The face $F=K$ corresponds to the equicorrelation set
$\cE$, which itself is an active set for almost every $y \in \R^n$ by Lemma 
\ref{lem:actbig}.
\end{proof}

Note that this means that we can enumerate all possible active sets of lasso solutions,
at a given $y,X,\lambda$, by enumerating the faces of the polytope $K$. This is a 
well-studied problem in computational geometry; see, for example, \citeasnoun{enumpoly}
and the references therein. It is worth mentioning that this could be computationally
intensive, as the number of faces can grow very large, even for a polytope of moderate
dimensions.

\bibliographystyle{agsm}
\bibliography{ryantibs}

\end{document}